\begin{document}
%\seambfrfp{2012}{35}{1}{\pageref{pgCAbdio}} %labeled on the last page
%\vspace*{28mm}
\pagestyle{myheadings}
\markboth
{\hfill{\small\rm Y. B. Jun et al.}} % authors
{{\small\rm Characterizations of Fuzzy Fated Filters of $R_0$-algebras
           Based on Fuzzy Points} \hfill} % abbreviated title

%\showwiselinenumbers
\title{Characterizations of Fuzzy Fated Filters of $R_0$-algebras
           Based on Fuzzy Points}

\bth
\author{Young Bae Jun}

\no{\smaller Department of Mathematics Education (and RINS),            Gyeongsang National University,
        Chinju 660-701, Korea.\\
Email: skywine@gmail.com}

\dzh

\author{J. Kavikumar \ and \ Muhmmad Akram}

\no{\smaller Department of Mathematics and Statistics,   Faculty of Science, Technology and Human Development, Universiti Tun Hussein Onn Malaysia,
         86400 Parit Raja, Batu Pahat, Johor, Malaysia\\
Email: kaviphd@gmail.com; makram\_69@yahoo.com}

\dzh

%\no{Received 10 February 2008\\
%Accepted 28 April 2010}
%\flh
%\no{Communicated by Nguyen Van Sanh}
%\flh
\no{\smallerbf AMS Mathematics Subject Classification(2000):}{\smaller\ 06F35, 03G25, 08A72  }
\flh
\no{\smallerbf Abstract.}{\smaller\  More general form of the notion of quasi-coincidence of a fuzzy point with a fuzzy subset is considered, and generalization of fuzzy fated of $R_0$-algebras is discussed.
      The notion of  an $(\in, \in\! \vee \, {q}_k)$-fuzzy fated filter  in
      a $R_0$-algebra  is introduced, and several properties are
investigated. Characterizations of an $(\in, \in\! \vee \,
{q}_k)$-fuzzy fated filter in an $R_0$-algebra are discussed.
       Using a collection of fated filters,  a $(\in, \in\! \vee \, {q}_k)$-fuzzy fated filter
       is established.}
\zyh
\no{\smallerbf Keywords:}{\smaller\ (Fated) filter,   Fuzzy (fated) filter,
    $(\in,$ $\in\! \vee \, {q})$-fuzzy (fated) filter,
    $(\in,$ $\in\! \vee \, {q}_k)$-fuzzy fated filter,
    Strong $(\in,$ $\in\! \vee \, {q}_k)$-fuzzy fated filter.}

%R Author's Environments %%%%%%%%%%%%%%%%%%%%%%%%%%%%%%%%%%%%%%%%%%%%%%%%%%%%%%%%%%%%%%%%%%%%
\def\iff{if and only if }
\def\TFAE{the following assertions are equivalent: }
\def\1k2{\tfrac{1-k}{2}}
\def\qk{\, {q}_k\, }
\def\ivq{\in\! \vee \, {q}\, }
\def\iwq{\in\! \wedge \, {q}\, }
\def\ivqk{\in\! \vee \, {q}_k\, }
\def\ivqr{\in\! \vee \, {q}_r\, }
\def\iwqk{\in\! \wedge \, {q}_k\, }
\def\oivqk{ \, \overline{\in}  \vee  \overline{{q}_k} \,}
\def\oivq{ \, \overline{\in}  \vee  \overline{q} \,}
\def\oi{ \, \overline{\in} \, }
\def\oqk{ \, \overline{{q}_k} \, }
\def\map{\mu}      \def\mbp{\nu}    \def\mcp{\gamma}
\def\ff{fuzzy fated }
%%%%%%%%%%%%%%%%%%%%%%%%%%%%%%%%%%%%%%%%%%%%%%%%%%%%%%%%%%%%%%%%%%%%%%%%%%%%%%%%%%%%%%
 %DD   \tableofcontents
\zjq
\normalsize
\zhangjie{Introduction}
\no
One important task of artificial intelligence is to make the
computers simulate beings in dealing with certainty and
uncertainty in information. Logic appears in a ``sacred''
(respectively, a ``profane'') form which is dominant in proof
theory (respectively, model theory). The role of logic in
mathematics and computer science is twofold -- as a tool for
applications in both areas, and a technique for laying the
foundations. Non-classical logic including many-valued logic,
fuzzy logic, etc., takes the advantage of classical logic to
handle information with various facets of uncertainty (see
\cite{INS172-1} for generalized theory of uncertainty), such as
fuzziness, randomness etc. Non-classical logic has become a formal
and useful tool for computer science to deal with fuzzy
information and uncertain information. Among all kinds of
uncertainties, incomparability is an important one which can be
encountered in our life.
        The concept of $R_0$-algebras was first introduced by Wang in
\cite{Wang00} by providing an algebraic proof of the completeness
theorem of a formal deductive system \cite{INS117-47}. Obviously,
 $R_0$-algebras are different from the BL-algebras. Further, Pei and Wang \cite{SCE32-56} proved $NM$-algebras are categorically isomorphic to $R_0$-algebras. Jun and Liu \cite{ID93249} studied (fated) filters of
$R_0$-algebras. They mentioned that the theory of $R_0$-algebras becomes one of the theoretical applications to the development of the theory of $MTL$-algebras. Some concrete practical and theoretical applications of $R_0$-algebras can be found in \cite{SCE32-56, Wang00}. Pei \cite{PEI01} proposed a new kind of fuzzy algebraic structure with the purpose to extending the concept of $R_0$-algebras and BL-algebras using results of normal residuated lattices into fuzzy settings. 
 Liu and Li \cite{INS171-61} discussed the fuzzy
set theory of filters in $R_0$-algebras.

The
algebraic structure of set theories dealing with uncertainties has
been studied by some authors. The most appropriate theory for
dealing with uncertainties is the theory of fuzzy sets developed
by Zadeh \cite{IC8-338}. 
Murali \cite{INS158-277} proposed a definition of a fuzzy point
belonging to fuzzy subset under a natural equivalence on fuzzy
subset. The idea of quasi-coincidence of a fuzzy point with a fuzzy
set, which is mentioned in \cite{JMAA76-571}, played a vital role
to generate some different types of fuzzy subsets. It is worth
pointing out that Bhakat and Das \cite{FSS51-235, FSS80-359}
initiated the concepts of $(\alpha,\beta)$-fuzzy subgroups by
using the ``belongs to'' relation $(\in \, )$ and
``quasi-coincident with'' relation $({q})$ between a fuzzy point
and a fuzzy subgroup, and introduced the concept of an $(\in,
\ivq)$-fuzzy subgroup.  In particular, an $(\in, \in\!\vee \,
{q})$-fuzzy subgroup is an important and useful generalization of
Rosenfeld's fuzzy subgroup. As a generalization of the notion of fuzzy
filters in $R_0$-algebras, Ma et al. \cite{MLQ55-493} dealt with
the notion of $(\in,$ $\ivq)$-fuzzy filters in $R_0$-algebras.
   In \cite{ID 980315}, Han et al. dealt with  the fuzzy set theory of fated filters
in $R_0$-algebras. They provided conditions for a fuzzy filter to
be a fuzzy fated filter, and introduced the notion of $(\in,$
$\ivq)$-fuzzy fated filters. They established a relation between an
$(\in,$ $\ivq)$-fuzzy filter and an $(\in,$ $\ivq)$-fuzzy fated
filter, and provided conditions for an $(\in,$ $\ivq)$-fuzzy
filter to be an $(\in,$ $\ivq)$-fuzzy fated filter. They also
dealt with characterizations of an $(\in,$ $\ivq)$-fuzzy fated
filter. It is now natural to investigate more general type of $(\in,$
$\ivq)$-\ff filters of an $R_0$-algebra.  As a first step in this
direction, we introduce the concept of an $(\in,$ $\ivqk)$-\ff
filter of an $R_0$-algebra, and discuss some fundamental aspects
of $(\in,$ $\ivqk)$-\ff filters. We deal with characterizations of
$(\in,$ $\ivqk)$-\ff filters.
   Using a collection of fated filters, we establish an $(\in,$
$\ivqk)$-\ff filter.

 The important achievement of the study with an $(\in,$
$\ivqk)$-\ff filter is that the notion of an $(\in,$ $\ivq)$-\ff
filter is a special case of an $(\in,$ $\ivqk)$-\ff filter, and
thus so many results in the paper \cite{ID 980315} are corollaries
of our results obtained in this paper.
\zjq
\zhangjie{Preliminaries}

\no
Let $L$ be a bounded distributive lattice with order-reversing
involution $\neg$ and a binary operation $\to.$ Then $(L, \wedge,
\vee, \neg, \to)$ is called an {\it $R_0$-algebra} (see
\cite{Wang00}) if it satisfies the following axioms:
\begin{lieju}
 \item[\rm (R1)] $x\to y=\neg y\to \neg x,$
 \item[\rm (R2)] $1\to x=x,$
 \item[\rm (R3)] $(y\to z)\wedge ((x\to y)\to (x\to z))=y\to z,$
 \item[\rm (R4)] $x\to (y\to z)=y\to (x\to z),$
 \item[\rm (R5)] $x\to (y\vee z)=(x\to y)\vee (x\to z),$
 \item[\rm (R6)] $(x\to y)\vee ((x\to y)\to (\neg x\vee y))=1.$
\end{lieju}

Let $L$ be an $R_0$-algebra. For any $x,y\in L,$ we define $x\odot
y=\neg (x\to \neg y)$ and $x\oplus y=\neg x\to y.$ It is proved
that $\odot$ and $\oplus$ are commutative, associative and
$x\oplus y=\neg(\neg x\odot \neg y),$ and $(L, \wedge, \vee,
\odot, \to, 0,1)$ is a residuated lattice.

For any elements $x,$ $y$ and $z$ of an $R_0$-algebra $L,$ we have
the following properties (see {\rm \cite{SCE32-56}}).
\begin{lieju}
 \item[\rm (a1)] $x\le y$ \iff $x\to y=1,$
 \item[\rm (a2)] $x\le y\to x,$
 \item[\rm (a3)] $\neg x=x\to 0,$
 \item[\rm (a4)] $(x\to y)\vee (y\to x)=1,$
 \item[\rm (a5)] $x\le y$ implies $y\to z \le x\to z,$
 \item[\rm (a6)] $x\le y$ implies $z\to x \le z\to y,$
 \item[\rm (a7)] $((x\to y)\to y)\to y=x\to y,$
 \item[\rm (a8)] $x\vee y=((x\to y)\to y)\wedge ((y\to x)\to x),$
 \item[\rm (a9)] $x\odot \neg x=0$ and $x\oplus \neg x=1,$
 \item[\rm (a10)] $x\odot y\le x\wedge y$ and $x\odot (x\to y)\le
                   x\wedge y,$
 \item[\rm (a11)] $(x\odot y)\to z=x\to (y\to z),$
 \item[\rm (a12)] $x\le y\to (x\odot y),$
 \item[\rm (a13)] $x\odot y\le z$ \iff $x\le y\to z,$
 \item[\rm (a14)] $x\le y$ implies $x\odot z\le y\odot z,$
 \item[\rm (a15)] $x\to y\le (y\to z)\to (x\to z),$
 \item[\rm (a16)] $(x\to y)\odot (y\to z)\le x\to z.$
\end{lieju}

 A non-empty subset $A$ of an $R_0$-algebra $L$ is called a {\it
  filter} of $L$ if it satisfies the following two conditions:
\begin{lieju}
  \item[\rm (b1)] $1 \in  A.$
  \item[\rm (b2)] $(\forall x \in  A)~(\forall y \in L)
        ~(x \to  y \in  A \Longrightarrow  y \in A).$
 \end{lieju}
     It can
be easily verified that a non-empty subset $A$ of an $R_0$-algebra
$L$ is a filter of $L$ if and only if it satisfies the following
conditions:
  \begin{lieju}
 \item[\rm (b3)] $(\forall x, y \in A)~(x\odot y \in A).$
 \item[\rm (b4)] $(\forall y \in L)~(\forall x \in  A)~(x \le  y \Longrightarrow  y \in A).$
\end{lieju}

\begin{definition}\label{{D091006}}
 A fuzzy subset $\map$ of an $R_0$-algebra $L$ is called a {\it fuzzy
 filter} of $L$ if it satisfies:
 \begin{lieju}
 \item[\rm (c1)] $(\forall x, y \in L)~(\map(x\odot y) \ge \min\{\map(x), \map(y)\}).$
 \item[\rm (c2)] $\map$ is order-preserving, that is,
    $(\forall x, y \in L)~(x \le  y \Longrightarrow  \map(x) \le \map(y)).$
 \end{lieju}\end{definition}
\fudl

\begin{thm}\label{{T091006}}
  A fuzzy subset $\map$ of an $R_0$-algebra $L$ is a fuzzy filter of $L$
 \iff  it satisfies:
\begin{lieju}
 \item[\rm (c3)] $(\forall x \in L)~(\map(1) \ge \map(x)),$
 \item[\rm (c4)] $(\forall x, y \in L)~(\map(y) \ge  \min\{\map(x \to y), \map(x)\}).$
\end{lieju}\end{thm}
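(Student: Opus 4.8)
The plan is to prove the two implications separately, treating (c1)--(c2) and (c3)--(c4) as two pairs of conditions and moving between them by means of the lattice and residuation facts (a1), (a2), (a10) and (a12). In both directions the strategy is the same: exhibit a suitable element whose image under $\map$ is squeezed between the two quantities to be compared, and then apply order-preservation together with the $\odot$-inequality.

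First I would establish the forward direction, assuming $\map$ is a fuzzy filter, i.e.\ that (c1) and (c2) hold. For (c3), observe that $1$ is the top element of the bounded lattice $L$, so $x \le 1$ for every $x \in L$; applying (c2) gives $\map(x) \le \map(1)$, which is exactly (c3). For (c4), the key observation is that $(x\to y)\odot x \le y$: this follows from (a10), which gives $x\odot(x\to y) \le x\wedge y \le y$, together with the commutativity of $\odot$. Now (c1) yields $\map\bigl((x\to y)\odot x\bigr) \ge \min\{\map(x\to y),\map(x)\}$, while (c2) applied to $(x\to y)\odot x \le y$ gives $\map\bigl((x\to y)\odot x\bigr) \le \map(y)$. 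Chaining these two inequalities produces (c4).

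Conversely, assume (c3) and (c4). For the order-preservation (c2), suppose $x \le y$; by (a1) this means $x\to y = 1$, so (c4) specializes to $\map(y) \ge \min\{\map(1),\map(x)\}$, and since (c3) gives $\map(1)\ge\map(x)$ the minimum equals $\map(x)$, whence $\map(x)\le\map(y)$. Having (c2) in hand, I would derive (c1) as follows: by (a12) we have $x \le y\to(x\odot y)$, so the freshly established (c2) gives $\map(x) \le \map\bigl(y\to(x\odot y)\bigr)$. Then I instantiate (c4) with $x\odot y$ in the role of $y$ and $y$ in the role of $x$, obtaining $\map(x\odot y) \ge \min\{\map\bigl(y\to(x\odot y)\bigr),\map(y)\}$; combining this with the previous inequality yields $\map(x\odot y)\ge\min\{\map(x),\map(y)\}$, which is (c1).

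I expect the main (and essentially the only) subtlety to lie in the converse proof of (c1): one must spot that the correct substitution into (c4) is $y\to(x\odot y)$, which is forced by (a12), and one must prove (c2) first so that it is available to bound $\map\bigl(y\to(x\odot y)\bigr)$ from below by $\map(x)$. Every other step is a direct application of the residuation inequalities already recorded in (a1)--(a12) and should present no difficulty.
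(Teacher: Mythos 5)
Your proof is correct, but there is nothing in the paper to compare it against: the theorem is stated in the Preliminaries without proof, as a known characterization of fuzzy filters of $R_0$-algebras quoted from the literature (it appears in the fuzzy-filter papers the authors cite, e.g.\ Liu--Li). Judged on its own merits, your argument is sound and complete. In the forward direction, (c3) is immediate from $x\le 1$ and (c2), and your key inequality $(x\to y)\odot x\le y$ does follow from (a10) together with the commutativity of $\odot$, which the paper records explicitly; chaining (c1) and (c2) through the element $(x\to y)\odot x$ then gives (c4). In the converse, your ordering of the steps is the right one and, as you note, genuinely forced: you need (c2) before (c1), since (c2) is what converts (a12), $x\le y\to (x\odot y)$, into the bound $\mu\bigl(y\to(x\odot y)\bigr)\ge \mu(x)$ that feeds into (c4) instantiated at the pair $\bigl(y,\,x\odot y\bigr)$. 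One small point worth making explicit: in your derivation of (c2) from $x\le y$, the step $\min\{\mu(1),\mu(x)\}=\mu(x)$ uses (c3), so (c3) is not merely a normalization condition but is essential there --- your write-up already handles this correctly. This is the standard argument for such characterizations, and I see no gap.
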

\fudl
 For any fuzzy subset $\map$ of $L$ and $t \in  (0, 1],$ the set
 \[U(\map; t) = \{x \in L \mid \map(x) \ge t\}\] is called a
 {\it level subset} of $L.$
It is well known that a fuzzy subset $\map$ of $L$ is a fuzzy
filter of $L$ \iff the non-empty level subset $U(\map;t),$ $t\in
(0,1],$ of $\map$ is a filter of $L.$

 A fuzzy subset $\map$ of a set $L$ of the form
  \begin{equation*}
   \map(y):=\left\{\begin{array}{ll}
         t\in (0,1] &{\rm if}\;\, y=x, \\
         0 &{\rm if}\;\, y\ne x,
 \end{array}\right.
 \end{equation*}
is said to be a  {\it fuzzy point} with support $x$ and value $t$
and is denoted by $x_t.$

For a fuzzy point $x_t$ and a fuzzy subset $\map$ of $L,$ Pu and
Liu \cite{JMAA76-571} introduced  the symbol $x_t\, {\alpha}\,
\map,$ where $\alpha \in \{\in, {q}, \ivq\}.$
   We say that
\begin{lieju}
 \item[\rm (i)] $x_t$ {\it belong to} $\map,$ denoted by $x_t\in \map,$ if $\map(x)\ge t,$
 \item[\rm (ii)] $x_t$ is {\it  quasi-coincident with} $\map,$ denoted by
     $x_t\, {q}\, \map,$ if $\map(x)+t>1,$
 \item[\rm (iii)]  $x_t\ivq \map$ if $x_t\in \map$ or $x_t\, {q}\, \map,$
\end{lieju}

\zjq
\zhangjie{Generalizations of fuzzy fated filters based on fuzzy points}
\no

In what follows, $L$ is an $R_0$-algebra  unless otherwise
specified.
    In \cite{ID93249}, the notion of a fated filter of $L$ is
introduced as follows.

 A non-empty subset $A$ of  $L$ is called a {\it
  fated filter} of $L$ (see \cite{ID93249})  if it satisfies (b1) and
  \begin{equation}\begin{split}\label{{z091220}}
  (\forall x,y \in  L)~(\forall a \in A)
        ~(a\to ((x \to  y)\to x) \in  A \Longrightarrow  x \in A).
 \end{split}\end{equation}
\fudl
\begin{lem}[\cite{ID93249}]\label{{L091209}}
A filter $F$ of $L$ is fated \iff the following assertion is
valid.
 \begin{equation}\begin{split}\label{{z091209}}
 (\forall x,y,z\in L)~ \bigl(x\to (y\to z)\in F, ~x\to y\in F
        ~\Rightarrow ~x\to z\in F\bigr).
 \end{split}\end{equation}
\end{lem}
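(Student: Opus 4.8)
The plan is to reduce fatedness to a Peirce-type condition and then to identify that condition with the implication in the statement. First I would record that, because $F$ is a filter, the defining implication of a fated filter is equivalent to the requirement that $(x\to y)\to x\in F$ imply $x\in F$ for all $x,y\in L$: if $a\in F$ and $a\to((x\to y)\to x)\in F$ then (b2) forces $(x\to y)\to x\in F$, while conversely the choice $a=1$ together with (R2) recovers the hypothesis from $(x\to y)\to x\in F$. Call this Peirce-type condition $(\ast)$. It then remains to show that $(\ast)$ holds \iff the implication $x\to(y\to z),\,x\to y\in F\Rightarrow x\to z\in F$ holds.

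For the forward implication I would pass through the contraction condition $x\to(x\to z)\in F\Rightarrow x\to z\in F$. Assuming $(\ast)$, apply (a15) with its middle variable replaced by $x\to z$ to get $x\to(x\to z)\le((x\to z)\to z)\to(x\to z)$; hence if $x\to(x\to z)\in F$ then the upper bound lies in $F$ by (b4), and $(\ast)$ applied to the pair $(x\to z,\,z)$ yields $x\to z\in F$. Conversely, given the contraction condition and the hypotheses $x\to(y\to z)\in F$ and $x\to y\in F$, I would rewrite $x\to(y\to z)=y\to(x\to z)$ by (R4), combine it with $x\to y$ through (a16) to obtain $(x\to y)\odot(y\to(x\to z))\le x\to(x\to z)$, conclude $x\to(x\to z)\in F$ from (b3) and (b4), and finally strip one copy of $x$ by contraction to reach $x\to z\in F$. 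This direction is routine once the contraction condition has been isolated.

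The backward implication, deriving $(\ast)$ from $x\to(y\to z),\,x\to y\in F\Rightarrow x\to z\in F$, is where the real work lies, and it is the step I expect to be the main obstacle. Given $(x\to y)\to x\in F$, I would apply the assumed implication with its three variables specialised to $\neg x$, $\neg(x\to y)$ and $0$ respectively. Its two hypotheses then hold for genuinely $R_0$-algebraic reasons: $\neg x\to\neg(x\to y)=(x\to y)\to x\in F$ by the involution law (R1), and $\neg x\to(\neg(x\to y)\to 0)=\neg x\to(x\to y)=(\neg x\odot x)\to y=0\to y=1\in F$ by (a3), (a11), (a9) and (a1). The conclusion of the implication is the quantity $\neg x\to 0=x$, so $x\in F$, as required.

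The crux is precisely this last substitution: the equivalence fails in a general residuated lattice, and the involutive structure of an $R_0$-algebra — entering through (R1), (a3) and the identity $x\odot\neg x=0$ of (a9) — is exactly what permits the trade between the Peirce/Boolean form $(\ast)$ and the contraction/implicative form of the statement. I would therefore present the reformulation $(\ast)$ first, dispatch the forward direction through the contraction condition, and concentrate the main effort on verifying that the three instantiated quantities $\neg x$, $\neg(x\to y)$ and $0$ behave as claimed.
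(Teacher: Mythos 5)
Your proof is correct; the wrinkle is that the paper contains no proof to compare it with, since Lemma \ref{{L091209}} is imported from Jun and Liu \cite{ID93249} without argument, as is the Peirce-style reformulation that appears as Lemma \ref{{L091209-1}}. Your opening reduction of fatedness of a filter to the condition $(x\to y)\to x\in F\Rightarrow x\in F$ is precisely that second imported lemma, and your two-line justification (take $a=1$ and use (R2) in one direction; use (b2) to peel off $a\in F$ in the other) is sound, so in effect you have made both quoted lemmas self-contained. The substantive content you add, the equivalence of the Peirce condition with (\ref{{z091209}}), checks out step by step: substituting $x\to z$ for $y$ in (a15) does give $x\to(x\to z)\le ((x\to z)\to z)\to (x\to z)$, whence (b4) and the Peirce condition applied to the pair $(x\to z,\,z)$ yield the contraction rule, after which (R4), (a16), (b3) and (b4) finish the forward direction; in the backward direction the instantiation $(\neg x,\,\neg(x\to y),\,0)$ is legitimate because (R1) identifies $\neg x\to \neg(x\to y)$ with $(x\to y)\to x\in F$, while (a3), the involution, (a11), (a9) and (a1) collapse $\neg x\to (\neg(x\to y)\to 0)=\neg x\to (x\to y)=(\neg x\odot x)\to y=0\to y=1\in F$, so (\ref{{z091209}}) delivers $\neg x\to 0=\neg\neg x=x\in F$. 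Your closing diagnosis is also accurate and worth keeping: in the three-element Heyting algebra $0<a<1$, the filter $\{1\}$ satisfies (\ref{{z091209}}) (its hypotheses force $x\le y$ and $x\wedge y\le z$, hence $x\le z$), yet $(a\to 0)\to a=1$ with $a\ne 1$, so the backward direction genuinely requires the involutive structure you invoke; what the paper buys by citing is brevity and focus on the new fuzzy material, while your argument buys a self-contained foundation that isolates exactly where involutivity enters.
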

\fudl
\begin{lem}[\cite{ID93249}]\label{{L091209-1}}
A filter $F$ of $L$ is fated \iff the following assertion is
valid.
 \begin{equation}\begin{split}\label{{z091209-1}}
 (\forall x,y\in L)~ \bigl((x\to y)\to x\in F
        ~\Rightarrow ~x\in F\bigr).
 \end{split}\end{equation}
\end{lem}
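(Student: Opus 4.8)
The plan is to establish the two implications separately, relying only on the filter axioms (b1) and (b2), the defining condition \eqref{{z091220}} of a fated filter, and the identity (R2). The whole argument is a short double implication, so I will not need the characterization in Lemma~\ref{{L091209}}.

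For the necessity part I would assume $F$ is a fated filter and derive \eqref{{z091209-1}}. The idea is to specialize the universally quantified element $a$ in \eqref{{z091220}} to the value $a=1$, which belongs to $F$ by (b1). Using (R2) we have $1\to ((x\to y)\to x)=(x\to y)\to x$, so the hypothesis $(x\to y)\to x\in F$ is literally the statement $1\to ((x\to y)\to x)\in F$. Feeding this into \eqref{{z091220}} with $a=1$ immediately yields $x\in F$.

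For the sufficiency part I would assume $F$ is a filter satisfying \eqref{{z091209-1}} and check that it is fated. Axiom (b1) holds because $F$ is already a filter, so only \eqref{{z091220}} remains. Given $a\in F$ and $a\to ((x\to y)\to x)\in F$, the modus-ponens property (b2) of a filter---applied with the distinguished element $a$ and the target $(x\to y)\to x$---forces $(x\to y)\to x\in F$. The assumed assertion \eqref{{z091209-1}} then delivers $x\in F$, which is exactly what \eqref{{z091220}} requires.

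I do not expect a genuine obstacle; the only point demanding a little care is the bookkeeping in the sufficiency direction, where one must read the compound expression $(x\to y)\to x$ as a single ``target'' element so that (b2) can strip away the leading $a\to(\cdot)$. Once that reduction is made the hypothesis \eqref{{z091209-1}} closes the argument at once.
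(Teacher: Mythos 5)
Your proof is correct. Note that the paper itself gives no proof of this lemma---it is imported verbatim from the cited reference \cite{ID93249}---so there is no in-text argument to compare against; your two-step verification (specializing $a=1$ in \eqref{{z091220}} together with (R2) and (b1) for necessity, and using the modus-ponens axiom (b2) to strip the prefix $a\to(\cdot)$ before invoking \eqref{{z091209-1}} for sufficiency) is exactly the standard, and essentially forced, argument for this equivalence. No gaps: both directions use only the stated axioms, and the bookkeeping you flag in the sufficiency direction is handled correctly, since (b2) applies with the compound element $(x\to y)\to x$ playing the role of the target $y$.
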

\fudl

Denote by $FF(L)$ the set of all fated filters of $L.$ Note that
$FF(L)$ is a complete lattice under the set inclusion with the
largest element $L$ and the least element $\{1\}.$

In what follows,  let $k$ denote an arbitrary
 element of $[0,1)$ unless otherwise specified.
        To say that  $x_t\, \qk \, \map,$ we mean $\map(x)+t+k>1.$
        To say that $x_t\ivqk \, \map,$ we mean
$x_t\in \map$ or $x_t\, \qk \, \map.$

\begin{definition}\label{{D091115-33}}
A fuzzy subset $\map$ of  $L$ is said to be an {\it $(\in,
\ivqk)$-\ff filter} of $L$ if it satisfies:
 \begin{enumerate}
 \item[\rm (1)]  $x_t\in \map
     ~\Longrightarrow ~1_t\ivqk \map,$
 \item[\rm (2)] $(a\to ((x\to y)\to x))_t\in \map, ~a_s\in \map
    ~\Longrightarrow  ~x_{\min\{t,s\}}\ivqk \map$
 \end{enumerate}
 for all $x,a,y\in L$ and $t,s\in (0,1].$
 \end{definition}

If a fuzzy subset $\map$ of $L$ satisfies (c3) and Definition
\ref{{D091115-33}}3.3(2), then we say that $\map$ is a {\it strong
$\left(\in, \ivqk\right)$-\ff filter} of $L.$
    A (strong) $\left(\in, \ivqk\right)$-\ff filter of $L$ with
    $k=0$ is called a (strong) $\left(\in, \ivq\right)$-\ff
    filter of $L.$

\begin{example}\label{{E091003-33}}
 Let $L=\{0,a,b,c,d,1\}$ be a set with the following Hasse diagram and Cayley
 tables:

\begin{table}[h]
\begin{center}
\begin{picture}(40,30) \thinlines
  \put(20,45){\circle*{3}}
  \put(20,28){\circle*{3}}
  \put(20,11){\circle*{3}}
  \put(20,-6){\circle*{3}}
  \put(20,-23){\circle*{3}}
  \put(20,-40){\circle*{3}}
  \put(25,-43){$0$}
  \put(25,-26){$a$}
  \put(25,-10){$b$}
  \put(25,8){$c$}
  \put(25,24){$d$}
  \put(25,43){$1$}
       \put(20,-40){\line(0,1){85}}
 \end{picture} \hspace{3mm}
\begin{tabular}{c|c} \hline\noalign{\smallskip}
          $x$ &  ~$\neg x$ \\ %\hline
   \noalign{\smallskip}\hline\noalign{\smallskip}
          $0$ & ~$1$  \\
          $a$ & ~$d$  \\
          $b$ & ~$c$  \\
          $c$ & ~$b$  \\
          $d$ & ~$a$  \\
          $1$ & ~$0$  \\
 \noalign{\smallskip}\hline
 \end{tabular} \hspace{10mm}
\begin{tabular}{c|cccccc} \hline\noalign{\smallskip}
        $\to$ & ~$0$ & ~$a$ & ~$b$ & ~$c$ & ~$d$ & ~$1$\\ %\hline
   \noalign{\smallskip}\hline\noalign{\smallskip}
          $0$ & ~$1$ & ~$1$ & ~$1$ & ~$1$ & ~$1$ & ~$1$ \\
          $a$ & ~$d$ & ~$1$ & ~$1$ & ~$1$ & ~$1$ & ~$1$ \\
          $b$ & ~$c$ & ~$c$ & ~$1$ & ~$1$ & ~$1$ & ~$1$ \\
          $c$ & ~$b$ & ~$b$ & ~$b$ & ~$1$ & ~$1$ & ~$1$ \\
          $d$ & ~$a$ & ~$a$ & ~$b$ & ~$c$ & ~$1$ & ~$1$ \\
          $1$ & ~$0$ & ~$a$ & ~$b$ & ~$c$ & ~$d$ & ~$1$ \\
 \noalign{\smallskip}\hline
 \end{tabular}
\end{center}
\end{table}

\noindent
    Then $(L,\wedge,\vee,\neg,\to,0,1)$ is an $R_0$-algebra
(see \cite{INS171-61}), where $x\wedge y=\min\{x,y\}$ and $x\vee
y=\max\{x,y\}.$
  Define a fuzzy subset $\map$ of $L$ by
 \[\map :L\rightarrow [0,1],
   ~~x \mapsto   \begin{cases}
          0.7  & \mbox{\rm if } \, x=1, \\
          0.6  & \mbox{\rm if } \, x=c, \\
          0.4  & \mbox{\rm if } \, x=d, \\
          0.2  & \mbox{\rm if } \, x\in \{0,a,b\}.
 \end{cases} \]
It is routine to verify that $\map$ is a strong
 $(\in,$ $\in\! \vee  {q}_{0.4})$-\ff filter of $L.$
 A fuzzy subset $\mbp$ of $L$ defined by
 \[\mbp :L\rightarrow [0,1],
   ~~x \mapsto   \begin{cases}
          0.8  & \mbox{\rm if } \, x\in \{c,d\}, \\
          0.7  & \mbox{\rm if } \, x=1, \\
          0.3  & \mbox{\rm if } \, x\in \{0,a,b\}.
 \end{cases} \]
is an $(\in,$ $\in\! \vee  {q}_{0.2})$-\ff filter of $L,$ but it
is not a strong $(\in,$ $\in\! \vee  {q}_{0.2})$-\ff filter of
$L.$
   \end{example}

\begin{thm}\label{{T110902}}
 Every $(\in, \in)$-\ff filter of $L$ is an $(\in,
 \ivqk)$-\ff filter.
 \end{thm}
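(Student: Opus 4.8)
The plan is to exploit the fact that the target relation $\ivqk$ is strictly weaker than the relation $\in$: whenever a fuzzy point $x_t$ satisfies $x_t\in \map,$ it automatically satisfies $x_t\ivqk \map,$ since by the definition given just before Definition~\ref{{D091115-33}} the relation $\ivqk$ is the disjunction ``$x_t\in \map$ or $x_t\qk \map$''. Because an $(\in,\in)$-\ff filter is, by definition, a fuzzy subset satisfying exactly the two implications of Definition~\ref{{D091115-33}} but with $\ivqk$ replaced by $\in$ in the consequents, this single observation suffices to transfer both defining conditions from the $(\in,\in)$ setting to the $(\in,\ivqk)$ setting.

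Concretely, I would first record the elementary remark $x_t\in \map \Rightarrow x_t\ivqk \map,$ valid for every $x\in L$ and every $t\in (0,1].$ Assuming $\map$ is an $(\in,\in)$-\ff filter, I would then verify condition~(1) of Definition~\ref{{D091115-33}}: given $x_t\in \map,$ the $(\in,\in)$-hypothesis yields $1_t\in \map,$ and the remark upgrades this to $1_t\ivqk \map.$ Next I would verify condition~(2): given $(a\to((x\to y)\to x))_t\in \map$ and $a_s\in \map,$ the $(\in,\in)$-hypothesis yields $x_{\min\{t,s\}}\in \map,$ and the same remark upgrades this to $x_{\min\{t,s\}}\ivqk \map.$ Both conditions of Definition~\ref{{D091115-33}} then hold, so $\map$ is an $(\in,\ivqk)$-\ff filter.

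There is essentially no obstacle here, and the argument uses no structural property of the $R_0$-algebra $L$ beyond the definitions themselves: the statement is a soft consequence of the fact that weakening the consequent membership relation from $\in$ to $\ivqk$ can only make the defining implications easier to satisfy. The only point deserving a moment's care is the definitional reading that an $(\in,\in)$-\ff filter satisfies precisely conditions~(1) and~(2) with the consequent relation taken to be $\in$; once this is granted the proof is immediate, and in fact the same one-line upgrade argument shows more generally that an $(\in,\in)$-\ff filter is an $(\in,\beta)$-\ff filter for any relation $\beta$ weaker than $\in.$
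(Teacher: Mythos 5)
Your proof is correct and is exactly the argument the paper has in mind: the paper dismisses this theorem with ``Straightforward,'' and your write-up simply makes explicit the intended one-line observation that $x_t\in \map$ implies $x_t\ivqk \map,$ so each consequent of the $(\in,\in)$-conditions upgrades to the corresponding $\ivqk$-consequent of Definition~\ref{{D091115-33}}. Your closing remark that the same argument yields an $(\in,\beta)$-fuzzy fated filter for any relation $\beta$ weaker than $\in$ is a harmless and accurate generalization.
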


\begin{proof} Straightforward. \end{proof}

Taking $k=0$ in Theorem 3.5 \ref{{T110902}}, we have the following
corollary.

\begin{cor}\label{C110902}
 Every $(\in, \in)$-\ff filter of $L$ is an $(\in,
 \ivq)$-\ff filter.
 \end{cor}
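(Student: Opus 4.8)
The plan is to obtain this corollary as the immediate specialization $k=0$ of the preceding Theorem 3.5. That theorem asserts that every $(\in,\in)$-\ff filter of $L$ is an $(\in,\ivqk)$-\ff filter for \emph{every} $k\in[0,1)$, and the convention fixed just after Definition 3.3 identifies an $(\in,\ivqk)$-\ff filter with $k=0$ as precisely an $(\in,\ivq)$-\ff filter. Hence I would simply substitute $k=0$ into the conclusion of Theorem 3.5; the corollary falls out with no additional argument, and this is exactly the route signalled by the sentence preceding the statement.

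For completeness I would also record the elementary reason the underlying theorem holds, namely that the relation $\in$ is stronger than $\ivq$: whenever $x_t\in\map$ is true, $x_t\ivq\map$ holds as well, since $\ivq$ is satisfied as soon as either $\in$ or $q$ holds. Starting from an $(\in,\in)$-\ff filter $\map$, verifying the two clauses of Definition 3.3 in the case $k=0$ is then routine. For clause~(1), $x_t\in\map$ gives $1_t\in\map$ by the $(\in,\in)$-property, whence $1_t\ivq\map$. For clause~(2), $(a\to((x\to y)\to x))_t\in\map$ together with $a_s\in\map$ yields $x_{\min\{t,s\}}\in\map$, and therefore $x_{\min\{t,s\}}\ivq\map$.

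I do not anticipate any obstacle. The only substantive (and itself straightforward) content is Theorem 3.5; the corollary merely names its $k=0$ instance, where the symbol $\ivqk$ collapses to $\ivq$ by definition. There is nothing further to compute beyond the observation that weakening every $\in$-conclusion to an $\ivq$-conclusion can never fail once the $\in$-conclusion already holds.
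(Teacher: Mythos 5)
Your proposal is correct and follows exactly the paper's route: the corollary is obtained by taking $k=0$ in Theorem 3.5, with the identification of an $(\in, \ivqk)$-\ff filter at $k=0$ as an $(\in, \ivq)$-\ff filter. Your supplementary verification of the two clauses of Definition 3.3 simply fills in the ``straightforward'' proof of Theorem 3.5 itself and is consistent with the paper.
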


The converse of Theorem 3.5 \ref{{T110902}} is not true as seen in the
following example.

\begin{example}
  The $(\in,$ $\in\! \vee  {q}_{0.2})$-\ff filter $\mbp$ of $L$ in
  Example 3.4\ref{{E091003-33}} is not an $(\in, \in)$-\ff filter of $L.$
\end{example}

Obviously, every strong $(\in, \ivqk)$-\ff filter is an $(\in,
\ivqk)$-\ff filter, but not converse as seen in Example 3.4.
\ref{{E091003-33}}

We provided characterizations of an $(\in, \ivqk)$-\ff filter.

\begin{thm}\label{{T091115-5}}
A fuzzy subset $\map$ of  $L$ is  an $(\in, \ivqk)$-\ff filter of
$L$ \iff it satisfies the following inequalities:
 \begin{enumerate}
 \item[\rm (1)] $(\forall x\in L)$ $(\map(1)\ge \min\{\map(x), \1k2\}),$
 \item[\rm (2)] $(\forall x,a,y\in L)$
   $(\map(x)\ge \min\{\map(a\to ((x\to y)\to x)), \map(a), \1k2\}).$
 \end{enumerate}
\end{thm}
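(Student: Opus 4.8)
The plan is to prove the equivalence by handling the two clauses of Definition~\ref{{D091115-33}} one at a time, since clause~(1) of the definition corresponds exactly to inequality~(1) of the theorem and clause~(2) corresponds to inequality~(2). In each case I would convert the ``fuzzy point'' implication into the stated inequality by the standard threshold argument, in which the membership relation $\in$ is detected through the level $t$ while quasi-coincidence $\qk$ is governed by the cut value $\1k2$.

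First, for the necessity direction (an $(\in, \ivqk)$-\ff filter satisfies the inequalities), I would argue by contraposition. For inequality~(1), suppose it fails, so that $\map(1) < \min\{\map(x), \1k2\}$ for some $x$; then I would choose $t$ with $\map(1) < t \le \min\{\map(x), \1k2\}$, observe that $x_t\in\map$, and apply Definition~\ref{{D091115-33}}(1) to obtain $1_t\ivqk\map$. But $\map(1)<t$ excludes $1_t\in\map$, while $t\le\1k2$ forces $\map(1)+t+k < 2t+k \le 1$, excluding $1_t\qk\map$, a contradiction. The argument for inequality~(2) has the same shape: choosing $t$ strictly above $\map(x)$ but at or below the three-way minimum makes both hypotheses $(a\to((x\to y)\to x))_t\in\map$ and $a_t\in\map$ hold, so Definition~\ref{{D091115-33}}(2) gives $x_t\ivqk\map$, which the same two estimates contradict.

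For the sufficiency direction I would assume the inequalities and verify the two clauses directly by splitting on the level $t$ relative to $\1k2$. For clause~(1), assume $x_t\in\map$, i.e.\ $\map(x)\ge t$. If $t\le\1k2$ then $\min\{\map(x),\1k2\}\ge t$, so inequality~(1) gives $\map(1)\ge t$ and hence $1_t\in\map$; if $t>\1k2$ then inequality~(1) gives $\map(1)\ge\1k2$, so $\map(1)+t+k > \1k2+\1k2+k = 1$ and hence $1_t\qk\map$. Either way $1_t\ivqk\map$. For clause~(2), writing $m=\min\{t,s\}$ and using $\map(a\to((x\to y)\to x))\ge t$ together with $\map(a)\ge s$, inequality~(2) yields $\map(x)\ge\min\{m,\1k2\}$, and the same two cases on $m$ versus $\1k2$ deliver $x_m\in\map$ or $x_m\qk\map$ respectively, i.e.\ $x_{\min\{t,s\}}\ivqk\map$.

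The computations are all elementary, so the only point requiring care---the main obstacle---is the bookkeeping around the threshold $\1k2$: one must check that the inequality $2t+k\le 1$ (equivalently $t\le\1k2$) is exactly what turns the quasi-coincidence alternative into an ordinary membership condition, and that in the complementary range the sum $\map(1)+t+k$ (respectively $\map(x)+m+k$) strictly exceeds $1$. Getting the direction of each inequality right, and keeping track that $\qk$ is a strict inequality while $\in$ is non-strict, is where an error would most easily slip in.
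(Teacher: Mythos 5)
Your proposal is correct and follows essentially the same route as the paper's own proof: for necessity, a contradiction argument choosing a threshold $t\in \left(0,\1k2\right]$ strictly above the would-be violating value and ruling out both $\in$ and $\qk$ (via $\map(1)+t+k<2t+k\le 1$), and for sufficiency, a direct verification of both clauses by splitting on the level versus $\1k2$. The only difference is cosmetic---you spell out the arithmetic excluding quasi-coincidence where the paper simply declares a contradiction.
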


\begin{proof} Let $\map$ be an $(\in, \ivqk)$-\ff
filter of $L.$ Assume that there exists $a\in L$ such that
$\map(1)<\min\{\map(a), \1k2\}.$ Then $\map(1)<t\le \min\{\map(a),
\1k2\}$ for some $t\in (0,\1k2],$ and so $a_t\in \map.$ It follows
from Definition \ref{{D091115-33}}3.3(1) that $1_t\ivqk \map,$ i.e.,
$1_t\in \map$ or $1_t\qk \map$ so that $\map(1)\ge t$ or
$\map(1)+t+k>1.$ This is a contradiction. Hence
 $\map(1)\ge \min\{\map(x), \1k2\}$ for all $x\in L.$
Suppose that there exist $a,b,c\in L$ such that
 \[\map(b)<\min\left\{\map(a\to ((b\to c)\to b)), \map(a), \1k2\right\}.\]
 Then $\map(b)<s\le \min\left\{\map(a\to ((b\to c)\to b)), \map(a), \1k2\right\}$
for some $s\in \left(0,\1k2\right].$ Thus
   $(a\to ((b\to c)\to b))_s\in \map$ and $a_s\in \map.$ Using
Definition \ref{{D091115-33}}3.3(2), we have
 $b_s=b_{\min\left\{s,s\right\}}\ivqk \map,$ which implies that
 $\map(b)\ge s$ or $\map(b)+s+k>1.$ This is a contradiction, and
 therefore
 \[\map(x)\ge \min\left\{\map(a\to ((x\to y)\to x)), \map(a), \1k2\right\}\]
for all $x,a,y\in L.$

 Conversely, let $\map$ be a fuzzy subset of
$L$ that satisfies two conditions (1) and (2). Let $x\in L$ and
$t\in (0,1]$ be such that $x_t\in \map.$ Then $\map(x)\ge t,$
which implies from (1) that  $\map(1)\ge \min\{\map(x), \1k2\}\ge
\min\{t, \1k2\}.$  If $t\le \1k2,$ then $\map(1)\ge t,$ i.e.,
$1_t\in \map.$ If $t>\1k2,$ then $\map(1)\ge \1k2$ and so
$\map(1)+t+k>\1k2+\1k2+k=1,$ i.e., $1_t\qk \map.$ Hence $1_t\ivqk
\map.$  Let $x,a,y\in L$ and $t,s\in (0,1]$ be such that $(a\to
((x\to y)\to x))_t\in \map$ and $a_s\in \map.$ Then $\map(a\to
((x\to y)\to x))\ge t$ and $\map(a)\ge s.$ It follows from (2)
that
 \begin{equation*}\begin{split}
 \map(x) &\ge \min\{\map(a\to ((x\to y)\to x)), \map(a), \1k2\}\\
  &\ge \min\{t,s,\1k2\}.
 \end{split}\end{equation*}
If $\min\{t,s\}\le \1k2,$ then $\map(x)\ge \min\{t,s\},$ which
shows that $x_{\min\{t,s\}}\in \map.$ If $\min\{t,s\}>\1k2,$ then
$\map(x)\ge \1k2,$ and thus $\map(x)+\min\{t,s\}+k>1,$ i.e.,
$x_{\min\{t,s\}}\qk \map.$ Hence $x_{\min\{t,s\}}\ivqk \map.$
Consequently, $\map$ is  an $(\in, \ivqk)$-\ff filter of $L.$
\end{proof}
\fudl
\begin{cor}
 If $\map$ is an $\left(\in, \ivqk\right)$-\ff filter of $L$ with
 $\map(1)<\1k2,$ then $\map$ is an $(\in,\in)$-\ff filter of $L.$
 \end{cor}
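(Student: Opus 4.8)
The plan is to reduce everything to the pointwise characterization established in Theorem~3.8, whose two inequalities replace the point-based definition by bounds truncated at $\1k2$. The decisive first step is to observe that the hypothesis $\map(1)<\1k2$ forces the whole of $\map$ below the threshold $\1k2$. Indeed, condition~(1) of that theorem yields $\min\{\map(x),\1k2\}\le \map(1)<\1k2$ for every $x\in L$; were $\map(x)\ge \1k2$ for some $x$, the left-hand minimum would equal $\1k2$, contradicting the strict inequality. Hence $\map(x)<\1k2$ for all $x\in L$.

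Once this uniform bound is in hand, the truncation at $\1k2$ inside both minima becomes inert, and the two inequalities of Theorem~3.8 simplify. Condition~(1) collapses to $\map(1)\ge \map(x)$ for all $x$, while condition~(2) collapses to
\[
\map(x)\ge \min\{\map(a\to ((x\to y)\to x)),\, \map(a)\}
\]
for all $x,a,y\in L$, since each argument of the minimum already lies below $\1k2$.

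It then remains to read these two pointwise inequalities back as the $(\in,\in)$-analogue of Definition~3.3. For the first clause, if $x_t\in \map$ then $\map(x)\ge t$, so $\map(1)\ge \map(x)\ge t$ and thus $1_t\in \map$. For the second, if $(a\to ((x\to y)\to x))_t\in \map$ and $a_s\in \map$, then $\map(a\to ((x\to y)\to x))\ge t$ and $\map(a)\ge s$, whence the collapsed inequality gives $\map(x)\ge \min\{t,s\}$, i.e.\ $x_{\min\{t,s\}}\in \map$. This exhibits $\map$ as an $(\in,\in)$-\ff filter of $L$.

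No genuine obstacle arises; the single point demanding attention is the opening step, where one must recognize that the local condition $\map(1)<\1k2$ propagates, through the characterization, to the global bound $\map(x)<\1k2$, thereby neutralizing the truncation at $\1k2$ that is precisely what separates the $(\in,\ivqk)$-theory from the ordinary $(\in,\in)$-theory.
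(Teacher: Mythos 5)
Your proof is correct and takes exactly the route the paper intends: the corollary is stated without proof as an immediate consequence of Theorem~3.8, and your argument---propagating $\map(1)<\1k2$ via condition~(1) to the global bound $\map(x)<\1k2$, collapsing the truncated minima, and reading the resulting inequalities back as the $(\in,\in)$ clauses---is precisely the natural way to fill in that omission. No gaps.
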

\fudl
\begin{cor}[\cite{ID 980315}]\label{{C110901}}
A fuzzy subset $\map$ of  $L$ is  an $(\in, \ivq)$-\ff filter of
$L$ \iff it satisfies the following inequalities:
 \begin{enumerate}
 \item[\rm (1)] $(\forall x\in L)$ $(\map(1)\ge \min\{\map(x), 0.5\}),$
 \item[\rm (2)] $(\forall x,a,y\in L)$
   $(\map(x)\ge \min\{\map(a\to ((x\to y)\to x)), \map(a), 0.5\}).$
 \end{enumerate}
\end{cor}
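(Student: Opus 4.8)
The plan is to obtain this statement with no new work, simply by specializing Theorem~\ref{{T091115-5}} to the value $k=0$. The first thing I would record is that the two notions involved are literally the same object at $k=0$: by the convention fixed immediately after Definition~\ref{{D091115-33}}, a fuzzy subset of $L$ is an $(\in, \ivq)$-\ff filter precisely when it is an $(\in, \ivqk)$-\ff filter with $k=0$. Consequently the left-hand side of the biconditional asserted here is exactly the left-hand side of the biconditional in Theorem~\ref{{T091115-5}} read at $k=0$.

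The second step is the matching bookkeeping on the right-hand side. The bound $\1k2$ appearing in conditions (1) and (2) of Theorem~\ref{{T091115-5}} equals $0.5$ when $k=0$, so substituting $k=0$ replaces every occurrence of $\1k2$ by $0.5$ and turns those two inequalities into exactly conditions (1) and (2) listed in the corollary. Since Theorem~\ref{{T091115-5}} is valid for every $k\in[0,1)$, and in particular for $k=0$, the desired equivalence follows immediately.

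I expect no genuine obstacle here: the entire content is the observation that the corollary is the $k=0$ instance of the theorem, together with the trivial fact that $\1k2$ equals $0.5$ when $k=0$. The only points demanding a little care are purely notational --- remembering that ``$(\in,\ivq)$'' is shorthand for the $k=0$ case of ``$(\in,\ivqk)$'', and that the symbolic threshold $\1k2$ collapses to the literal constant $0.5$. If a self-contained argument were preferred over citing the theorem, I would rerun its proof verbatim with $k$ set to $0$ throughout, so that each use of $x_t\qk\map$ reads $\map(x)+t>1$ and each threshold $\1k2$ reads $0.5$; but invoking Theorem~\ref{{T091115-5}} directly is the cleaner route.
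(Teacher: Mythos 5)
Your proposal is correct and coincides with the paper's own proof, which reads simply ``It follows from taking $k=0$ in Theorem \ref{{T091115-5}}''; your extra remarks that $\tfrac{1-k}{2}$ collapses to $0.5$ and that the $(\in, \in\!\vee\,{q})$ notion is by definition the $k=0$ case of $(\in, \in\!\vee\,{q}_k)$ just make that specialization explicit. Nothing further is needed.
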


\begin{proof} It follows from taking $k=0$ in Theorem 3.8.
\ref{{T091115-5}}\end{proof}
\fudl
\begin{cor}\label{{C091123-3}}
Every strong $(\in,$ $\ivqk)$-\ff filter $\map$ of $L$ satisfies
the following inequalities:
 \begin{enumerate}
  \item[\rm (1)] $(\forall x\in L)~\bigl(\map(1)\ge \min\{\map(x), \1k2\}\bigr),$
  \item[\rm (2)] $(\forall x,a,y\in L)
    ~\bigl(\map(x)\ge \min\{\map(a\to ((x\to y)\to x)), \map(a), \1k2\}\bigr).$
\end{enumerate}\end{cor}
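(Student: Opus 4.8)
The plan is to derive Corollary \ref{{C091123-3}} directly from Theorem \ref{{T091115-5}} by showing that every strong $(\in, \ivqk)$-\ff filter is in particular an $(\in, \ivqk)$-\ff filter, and then simply invoking the two inequalities already established in that theorem. Recall that a strong $(\in, \ivqk)$-\ff filter is defined to satisfy condition (c3), namely $\map(1)\ge \map(x)$ for all $x\in L,$ together with condition (2) of Definition \ref{{D091115-33}}3.3. So the two structural facts I may use are: the genuine pointwise inequality $\map(1)\ge \map(x),$ and the second defining implication in the fuzzy-point formulation.

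First I would dispatch inequality (1). Since a strong filter satisfies (c3), we have $\map(1)\ge \map(x)\ge \min\{\map(x), \1k2\}$ for every $x\in L,$ which is strictly stronger than what is required; this step is essentially immediate and carries no obstacle. The substantive content lies in inequality (2). Here I would argue that condition (2) of Definition \ref{{D091115-33}}3.3 is exactly the second defining clause of an ordinary $(\in, \ivqk)$-\ff filter, and that the first defining clause (Definition \ref{{D091115-33}}3.3(1)) is implied by (c3): indeed if $x_t\in \map$ then $\map(1)\ge \map(x)\ge t,$ so $1_t\in \map$ and hence $1_t\ivqk \map.$ Therefore a strong $(\in, \ivqk)$-\ff filter is in particular an $(\in, \ivqk)$-\ff filter.

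Consequently, applying Theorem \ref{{T091115-5}} to $\map$ yields precisely the two inequalities (1) and (2) asserted in the corollary. This reduces the entire proof to the observation that \emph{strong implies ordinary}, already noted in the remark preceding Theorem \ref{{T091115-5}}, after which the characterization theorem finishes the job. The cleanest presentation is thus: note that $\map$ satisfies Definition \ref{{D091115-33}}3.3(1) (via (c3)) and Definition \ref{{D091115-33}}3.3(2), so it is an $(\in, \ivqk)$-\ff filter, and then quote Theorem \ref{{T091115-5}}.

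I do not anticipate any real obstacle, since the corollary is weaker than Theorem \ref{{T091115-5}} in two ways: it only claims one direction (necessity of the inequalities) and it applies to the more restrictive class of strong filters. The only point requiring a moment's care is confirming that (c3) genuinely implies Definition \ref{{D091115-33}}3.3(1), i.e.\ that the pointwise inequality $\map(1)\ge\map(x)$ upgrades the fuzzy-point statement; but this is routine, as shown above. Everything else is a direct appeal to the already-proved characterization.
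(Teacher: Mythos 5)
Your proposal is correct and follows exactly the route the paper intends: the remark preceding the corollary already observes that every strong $(\in, \ivqk)$-\ff filter is an $(\in, \ivqk)$-\ff filter (your verification that (c3) yields Definition \ref{{D091115-33}}(1) is the right justification for that remark), after which Theorem \ref{{T091115-5}} gives both inequalities. The paper omits the proof precisely because it reduces to this observation, so your argument fills in the intended details without any gap.
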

\fudl
\begin{thm}\label{{T091115-66}}
A fuzzy subset $\map$ of  $L$ is  an $(\in, \ivqk)$-\ff filter of
$L$ \iff it satisfies the following assertion:
  \begin{eqnarray}\label{{z091115-6}}
 \left(\forall t\in \left(0,\1k2\right]\right)
   ~\left(U(\map;t)\in FF(L)\cup \{\emptyset\}\right).
 \end{eqnarray}\end{thm}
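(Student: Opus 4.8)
The plan is to route both implications through the inequality characterization of Theorem \ref{{T091115-5}}, turning the statement into a routine comparison between pointwise inequalities and level-set membership. The pivotal observation is that restricting the cut value to $t\in(0,\1k2]$ makes $\min\{t,\1k2\}=t$, which is exactly what converts the threshold $\1k2$ appearing in Theorem \ref{{T091115-5}} into honest containment in $U(\map;t)$.

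First I would prove the forward direction. Assume $\map$ is an $(\in,\ivqk)$-\ff filter, so conditions (1) and (2) of Theorem \ref{{T091115-5}} hold. Fix $t\in(0,\1k2]$ with $U(\map;t)\ne\emptyset$ and verify (b1) and (\ref{{z091220}}). Choosing any $x\in U(\map;t)$, Theorem \ref{{T091115-5}}(1) gives $\map(1)\ge\min\{\map(x),\1k2\}\ge\min\{t,\1k2\}=t$, so $1\in U(\map;t)$. For (\ref{{z091220}}), take $a\in U(\map;t)$ and $x,y\in L$ with $a\to((x\to y)\to x)\in U(\map;t)$; then $\map(a)\ge t$ and $\map(a\to((x\to y)\to x))\ge t$, so Theorem \ref{{T091115-5}}(2) yields $\map(x)\ge\min\{t,t,\1k2\}=t$, i.e. $x\in U(\map;t)$. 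Hence $U(\map;t)$ satisfies the defining conditions of a fated filter.

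Next I would handle the converse by contraposition inside Theorem \ref{{T091115-5}}. Suppose every nonempty $U(\map;t)$ with $t\in(0,\1k2]$ lies in $FF(L)$. If Theorem \ref{{T091115-5}}(1) failed for some $x$, I would put $t:=\min\{\map(x),\1k2\}$; since $\map(1)<t$ and $\map\ge 0$ we get $t\in(0,\1k2]$ and $x\in U(\map;t)$, whence $U(\map;t)\in FF(L)$ forces $1\in U(\map;t)$, i.e. $\map(1)\ge t$, a contradiction. If Theorem \ref{{T091115-5}}(2) failed for some $x,a,y$, I would put $t:=\min\{\map(a\to((x\to y)\to x)),\map(a),\1k2\}$; then $t\in(0,\1k2]$ and both $a$ and $a\to((x\to y)\to x)$ belong to $U(\map;t)$, so applying (\ref{{z091220}}) to the fated filter $U(\map;t)$ gives $x\in U(\map;t)$, i.e. $\map(x)\ge t$, again a contradiction. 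Thus both inequalities of Theorem \ref{{T091115-5}} hold and $\map$ is an $(\in,\ivqk)$-\ff filter.

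I do not expect a genuine obstacle here; the only care needed is the bookkeeping that each manufactured cut value $t$ really lies in $(0,\1k2]$ — strict positivity comes from the assumed strict inequality together with $\map\ge 0$, and the upper bound is built into the $\min$ — and that the relevant level set is nonempty before the fated-filter hypothesis is invoked. Everything else is a direct transcription between the inequalities of Theorem \ref{{T091115-5}} and the membership conditions (b1) and (\ref{{z091220}}).
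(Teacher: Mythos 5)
Your proposal is correct and follows essentially the same route as the paper: both directions pass through the inequality characterization of Theorem \ref{{T091115-5}}, with the forward direction a direct level-set verification of (b1) and (\ref{{z091220}}), and the converse by contradiction using the manufactured cut $t:=\min\{\cdots,\1k2\}$. Your explicit remark that strict positivity of $t$ follows from $\map\ge 0$ together with the assumed strict inequality is a small point the paper leaves implicit, but otherwise the two arguments coincide step for step.
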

 
\begin{proof} Assume that $\map$ is  an $\left(\in, \ivqk\right)$-\ff
filter of $L.$ Let $t\in \left(0,\1k2\right]$ be such that
$U(\map;t)\ne \emptyset.$ Then there exists $x\in U(\map;t),$ and
so $\map(x)\ge t.$ Using Theorem \ref{{T091115-5}}3.8(1), we get
 \[\map(1)\ge \min\left\{\map(x), \1k2\right\}\ge \min\left\{t,\1k2\right\}=t,\]
i.e., $1\in U(\map;t).$ Assume that  $a\to ((x\to y)\to x)\in
U(\map;t)$ for all $x,y\in L$ and $a\in U(\map;t).$   Then
 $\map(a\to ((x\to y)\to x))\ge t$ and $\map(a)\ge t.$ It follows
 from Theorem \ref{{T091115-5}}3.8(2) that
 \begin{equation*}\begin{split}
 \map(x) &\ge \min\left\{\map(a\to ((x\to y)\to x)), \map(a), \1k2\right\}\\
 &\ge \min\left\{t,\1k2\right\}=t
 \end{split}\end{equation*}
 so that $x\in U(\map;t).$ Therefore $U(\map;t)$ is a fated filter
 of $L.$

Conversely, let $\map$ be a fuzzy subset of $L$ satisfying the
assertion (\ref{{z091115-6}}). Assume that
  $\map(1)<\min\left\{\map(a), \1k2\right\}$ for some $a\in L.$ Putting
  $t_a:=\min\left\{\map(a), \1k2\right\},$ we have $a\in U\left(\map;t_a\right)$ and so
  $U\left(\map;t_a\right)\ne \emptyset.$ Hence $U\left(\map;t_a\right)$ is a fated
  filter of $L$ by (\ref{{z091115-6}}), which implies that $1\in
U\left(\map;t_a\right).$ Thus $\map(1)\ge t_a,$ which is a
contradiction. Therefore $\map(1)\ge \min\left\{\map(x),
\1k2\right\}$ for all $x\in L.$ Suppose that
 \[\map(x)<\min\left\{\map(a\to ((x\to y)\to x)), \map(a), \1k2\right\}\]
for some $x,a,y\in L.$ Taking
 $t_x:=\min\left\{\map(a\to ((x\to y)\to x)), \map(a), \1k2\right\},$ we get
 $a\to ((x\to y)\to x)\in U\left(\map;t_x\right)$ and $a\in U\left(\map;t_x\right).$
It follows from (\ref{{z091220}}) that $x\in
U\left(\map;t_x\right),$ i.e., $\map(x)\ge t_x.$ This is a
contradiction. Hence
 \[\map(x)\ge \min\left\{\map(a\to ((x\to y)\to x)), \map(a), \1k2\right\}\]
for all $x,a,y\in L.$ Using Theorem 3.8\ref{{T091115-5}}, we conclude
that $\map$ is  an $\left(\in, \ivqk\right)$-\ff filter of $L.$
\end{proof}

If we take $k=0$ in Theorem 3.12\ref{{T091115-66}}, then we have the
following corollary.

\begin{cor}[\cite{ID 980315}] \label{{C091115-66}}
A fuzzy subset $\map$ of  $L$ is  an $(\in, \ivq)$-\ff filter of
$L$ \iff it satisfies the following assertion:
  \begin{eqnarray}\label{{z091115-66}}
 (\forall t\in (0,0.5])~\left(U(\map;t)\in FF(L)\cup \{\emptyset\}\right).
 \end{eqnarray}\end{cor}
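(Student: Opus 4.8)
The plan is to derive this corollary as the special case $k=0$ of Theorem 3.12\ref{{T091115-66}}, so that essentially no new argument is needed beyond unwinding the conventions. First I would recall, from the remark following Definition 3.3\ref{{D091115-33}}, that an $(\in, \ivqk)$-\ff filter of $L$ with $k=0$ is by definition exactly an $(\in, \ivq)$-\ff filter of $L$. Hence the object whose characterization is sought in the corollary is literally the $k=0$ instance of the object characterized in Theorem 3.12\ref{{T091115-66}}.

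Next I would carry out the only piece of bookkeeping involved, namely evaluating the threshold $\1k2$ at $k=0$: since $\tfrac{1-0}{2}=\tfrac{1}{2}$, we get $\1k2 = 0.5$. Substituting this into assertion (\ref{{z091115-6}}) turns the index range $\left(0,\1k2\right]$ into $(0,0.5]$, while the clause $U(\map;t)\in FF(L)\cup\{\emptyset\}$ is left untouched; this is exactly assertion (\ref{{z091115-66}}). Therefore the biconditional supplied by Theorem 3.12\ref{{T091115-66}} specializes, term for term, to the biconditional asserted in the corollary.

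With these two identifications in place, the proof reduces to a single invocation: apply Theorem 3.12\ref{{T091115-66}} with $k=0$. I do not expect any genuine obstacle, since the substantive mathematics---the two implications relating the level-set condition on $U(\map;t)$ to the fuzzy-point conditions of Definition 3.3\ref{{D091115-33}}---has already been established in the proof of Theorem 3.12\ref{{T091115-66}} and simply needs to be read off at $k=0$. The only points requiring care are the harmless arithmetic identity $\1k2 = 0.5$ and the definitional convention that the $k=0$ case of the $\qk$-notion coincides with the $q$-notion.
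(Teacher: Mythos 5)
Your proposal is correct and matches the paper's own derivation exactly: the paper obtains this corollary by taking $k=0$ in Theorem 3.12\ref{{T091115-66}}, which is precisely your specialization, including the observations that $\tfrac{1-0}{2}=0.5$ and that the $k=0$ case of the $\qk$-notion is the $q$-notion.
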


\begin{thm}\label{{T110923}}
 If $k<r$ in $[0,1),$ then every $\left(\in, \ivqk\right)$-fuzzy
 \ff filter of $L$ is an $\left(\in, \ivqr\right)$-fuzzy
 \ff filter.
 \end{thm}

 \begin{proof} Straightforward. \end{proof}

 The converse of Theorem 3.14\ref{{T110923}} is not true as seen in
 the following example.

 \begin{example}
  Consider an $R_0$-algebra $L=\{0,a,b,c,d,1\}$ which is appeared in
 Example 3.4\ref{{E091003-33}}.
  Define a fuzzy subset $\map$ of $L$ by
 \[\map :L\rightarrow [0,1],
   ~~x \mapsto   \begin{cases}
          0.9  & \mbox{\rm if } \, x=d, \\
          0.7  & \mbox{\rm if } \, x=c, \\
          0.3  & \mbox{\rm if } \, x=1, \\
          0.1  & \mbox{\rm if } \, x\in \{0,a,b\}.
 \end{cases} \]
It is routine to verify that $\map$ is an
 $(\in,$ $\in\! \vee  {q}_{0.4})$-\ff filter of $L.$
 Since
  \[U(\map;t)=\begin{cases}
          \{c,d\}  & \mbox{\rm if } \, t\in (0.3, 0.35], \\
          \{c,d,1\}  & \mbox{\rm if } \, t\in (0.1, 0.3], \\
          L  & \mbox{\rm if } \, t\in (0, 0.1], \\
 \end{cases} \]
we know from Theorem 3.12\ref{{T091115-66}} that $\map$ is not
 an $(\in,$ $\in\! \vee  {q}_{0.3})$-\ff filter of $L.$
 \end{example}

\begin{prop}\label{{P091209-9}}
 Every $(\in,$ $\ivqk)$-\ff filter $\map$ of $L$ satisfies the
 following inequalities.
 \begin{enumerate}
 \item[\rm (1)] $\map(x\to z)\ge \min\left\{\map(x\to (y\to z)),
                \map(x\to y), \1k2\right\},$
 \item[\rm (2)] $\map(x)\ge \min\left\{\map((x\to y)\to x), \1k2\right\}$
\end{enumerate}
for all $x,y,z\in L.$
\end{prop}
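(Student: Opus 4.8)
The plan is to derive both inequalities from the level-set characterization of Theorem \ref{{T091115-66}} together with the two equivalent descriptions of fated filters recorded in Lemmas \ref{{L091209}} and \ref{{L091209-1}}. The observation driving this is that inequality (1) is precisely the ``fuzzification'' of the implication $x\to(y\to z)\in F,\ x\to y\in F \Rightarrow x\to z\in F$ of Lemma \ref{{L091209}}, while inequality (2) fuzzifies the implication $(x\to y)\to x\in F \Rightarrow x\in F$ of Lemma \ref{{L091209-1}}. Since $k\in[0,1)$ gives $\1k2\in(0,\tfrac12]$, the relevant threshold interval $(0,\1k2]$ sits inside $(0,1]$, so the level-set theorem applies on it.

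For part (1), I would fix $x,y,z\in L$ and set $t:=\min\{\map(x\to(y\to z)),\map(x\to y),\1k2\}$. If $t=0$ the inequality holds trivially, so assume $t>0$; then $t\in(0,\1k2]$ and Theorem \ref{{T091115-66}} guarantees that $U(\map;t)\in FF(L)\cup\{\emptyset\}$. Since $\map(x\to(y\to z))\ge t$ and $\map(x\to y)\ge t$, both $x\to(y\to z)$ and $x\to y$ lie in $U(\map;t)$, so this level set is nonempty and hence a fated filter. Applying Lemma \ref{{L091209}} inside $U(\map;t)$ then forces $x\to z\in U(\map;t)$, i.e.\ $\map(x\to z)\ge t$, which is exactly the claimed inequality.

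For part (2), I would run the identical argument with $t:=\min\{\map((x\to y)\to x),\1k2\}$: after dispatching the case $t=0$, Theorem \ref{{T091115-66}} makes $U(\map;t)$ a fated filter containing $(x\to y)\to x$, and Lemma \ref{{L091209-1}} then yields $x\in U(\map;t)$, that is $\map(x)\ge t$. Alternatively, (2) can be obtained directly from Theorem \ref{{T091115-5}}: putting $a=1$ in its condition (2) and using (R2) to rewrite $1\to((x\to y)\to x)$ as $(x\to y)\to x$ gives $\map(x)\ge\min\{\map((x\to y)\to x),\map(1),\1k2\}$, and bounding $\map(1)\ge\min\{\map((x\to y)\to x),\1k2\}$ via condition (1) of the same theorem collapses the right-hand side to $\min\{\map((x\to y)\to x),\1k2\}$.

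I do not anticipate a genuine obstacle here; the content is essentially a translation between the fuzzy-point formulation and the crisp filter axioms. The only points requiring care are the harmless degenerate case $t=0$, where the inequality is vacuous, and the verification that $t\le\1k2$, which is needed to legitimately invoke the level-set theorem on the interval $(0,\1k2]$.
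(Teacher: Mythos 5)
Your proof is correct and takes essentially the same route as the paper's: both inequalities are derived from the level-set characterization of Theorem \ref{{T091115-66}} together with Lemmas \ref{{L091209}} and \ref{{L091209-1}}, the only difference being that you argue directly (with the harmless $t=0$ case) while the paper argues by contradiction. Your alternative derivation of (2) from Theorem \ref{{T091115-5}} by setting $a=1$ and invoking (R2) is also valid, though the paper does not use it.
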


\begin{proof} (1) Suppose that there exist $a,b,c\in L$ such that
 \[\map(a\to c)<\min\left\{\map(a\to (b\to c)), \map(a\to b), \1k2\right\}.\]
Taking
 $t:=\min\left\{\map(a\to (b\to c)), \map(a\to b),\1k2\right\}$
  implies that
   $a\to (b\to c)\in U(\map;t),$  $a\to b\in U(\map;t)$ and $t\in \left(0,\1k2\right].$ Since
$U(\map;t)\in FF(L)$ by Theorem 3.12\ref{{T091115-66}}, it follows
from Lemma 3.1\ref{{L091209}} that $a\to c\in U(\map;t),$ i.e.,
$\map(a\to c)\ge t.$ This is a contradiction, and therefore $\map$
satisfies (1).

(2) If $\map$ is an $\left(\in,\ivqk\right)$-\ff filter of $L,$
then $U(\map;t)\in FF(L)\cup \{\emptyset\}$ for all $t\in
\left(0,\1k2\right]$ by Theorem 3.12\ref{{T091115-66}}. Hence
$U(\map;t)\in F(L)\cup \{\emptyset\}$ for all $t\in
\left(0,\1k2\right].$ Suppose that
 \[\map(x)<t\le \min\left\{\map((x\to y)\to x), \1k2\right\}\]
for some $x,y\in L$ and $t\in \left(0,\1k2\right].$  Then $(x\to
y)\to x\in U(\map;t),$ which implies from Lemma 3.2\ref{{L091209-1}}
that $x\in U(\map;t),$ i.e., $\map(x)\ge t.$ This is a
contradiction. Hence
 $\map(x)\ge \min\left\{\map((x\to y)\to x), \1k2\right\}$ for all $x,y\in L.$
\end{proof}
\fudl
\begin{cor}[\cite{ID 980315}]\label{{C091209}}
 Every $(\in,$ $\ivq)$-\ff filter $\map$ of $L$ satisfies the
 following inequalities.
 \begin{enumerate}
 \item[\rm (1)] $\map(x\to z)\ge \min\{\map(x\to (y\to z)),
                \map(x\to y), 0.5\},$
 \item[\rm (2)] $\map(x)\ge \min\{\map((x\to y)\to x), 0.5\}$
\end{enumerate}
for all $x,y,z\in L.$
\end{cor}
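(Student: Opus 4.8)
The plan is to derive this corollary directly from Proposition~\ref{{P091209-9}} by specializing to $k=0$, in parallel with the way Corollary~\ref{{C110901}} and Corollary~\ref{{C091115-66}} were obtained from their respective theorems. By the convention recorded right after Definition~\ref{{D091115-33}}, an $(\in, \ivq)$-\ff filter of $L$ is exactly an $(\in, \ivqk)$-\ff filter with $k=0$. Since $\1k2$ reduces to $\tfrac{1-0}{2}=0.5$ when $k=0$, inequalities (1) and (2) of Proposition~\ref{{P091209-9}} become verbatim the inequalities (1) and (2) asserted here. Hence the statement is immediate, and I expect no genuine obstacle: the only thing to check is the harmless arithmetic that $\1k2$ collapses to $0.5$ at $k=0$.

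For completeness, if one preferred a self-contained argument not routed through Proposition~\ref{{P091209-9}}, I would reason through level sets. Since $\map$ is an $(\in, \ivq)$-\ff filter, Corollary~\ref{{C091115-66}} guarantees that every non-empty $U(\map;t)$ with $t\in (0,0.5]$ is a fated filter of $L$. For part (1), I would put $t:=\min\{\map(x\to (y\to z)), \map(x\to y), 0.5\}$; then $t\in (0,0.5]$ and both $x\to (y\to z)$ and $x\to y$ lie in $U(\map;t)$, so Lemma~\ref{{L091209}}, which records the closure $x\to(y\to z),\,x\to y\Rightarrow x\to z$ enjoyed by a fated filter, gives $x\to z\in U(\map;t)$, that is $\map(x\to z)\ge t$. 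For part (2), I would put $t:=\min\{\map((x\to y)\to x), 0.5\}$; then $(x\to y)\to x\in U(\map;t)$, and Lemma~\ref{{L091209-1}} yields $x\in U(\map;t)$, i.e.\ $\map(x)\ge t$. This reproduces, with $\1k2$ replaced by $0.5$, exactly the computation carried out in the proof of Proposition~\ref{{P091209-9}}, confirming that the corollary carries no content beyond that specialization.
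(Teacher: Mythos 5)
Your main argument---specializing Proposition \ref{{P091209-9}} to $k=0$, where $\1k2$ becomes $0.5$---is exactly the paper's intended proof, matching how it derives its other $k=0$ corollaries (e.g.\ Corollary \ref{{C110901}}). Your supplementary level-set argument is also sound and simply replays the proof of Proposition \ref{{P091209-9}}, except for the trivial edge case where the minimum $t$ equals $0$ (so $t\notin (0,0.5]$ and $U(\map;t)$ is undefined), in which case the asserted inequalities hold vacuously since $\map\ge 0$.
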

\fudl
\begin{thm}\label{{T091118-8}}
 If $F$ is a fated filter of $L,$ then a fuzzy subset $\map$ of
 $L$ defined by
 \[\map:L\rightarrow [0,1], ~x\mapsto
   \left\{\begin{array}{ll}
         t_1 & \textrm{if \, $x\in F$}, \\
         t_2 & \textrm{otherwise}
 \end{array}\right.\]
where $t_1\in \left[\1k2,1\right]$ and $t_2\in
\left(0,\1k2\right),$ is an $\left(\in, \ivqk\right)$-\ff filter
of $L.$
\end{thm}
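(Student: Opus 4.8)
The plan is to reduce everything to the level-set characterization established in Theorem 3.12 (\ref{{T091115-66}}), which says that $\map$ is an $(\in, \ivqk)$-\ff filter of $L$ if and only if $U(\map;t)\in FF(L)\cup\{\emptyset\}$ for every $t\in\left(0,\1k2\right]$. Since $\map$ takes only the two values $t_1$ and $t_2$, its level sets are extremely rigid, so I would avoid working with fuzzy points directly and instead just compute $U(\map;t)$ for $t$ in the relevant range and check that the result is always a fated filter.

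First I would record the two governing thresholds coming from the hypotheses $t_1\in\left[\1k2,1\right]$ and $t_2\in\left(0,\1k2\right)$, namely $t_1\ge\1k2$ and $t_2<\1k2$. Membership of $x$ in $U(\map;t)$ is then decided purely by comparing $t$ with $t_1$ (when $x\in F$) or with $t_2$ (when $x\notin F$). A short case split on $t\in\left(0,\1k2\right]$ gives: if $0<t\le t_2$ then both $t_1\ge t$ and $t_2\ge t$, so $U(\map;t)=L$; and if $t_2<t\le\1k2$ then $t_1\ge\1k2\ge t>t_2$, so exactly the points of $F$ survive and $U(\map;t)=F$. The strict inequality $t_2<\1k2$ is what guarantees that these two subintervals genuinely cover $\left(0,\1k2\right]$ and that the $t_2$-valued points drop out once $t>t_2$; this is the one spot deserving care, though it is routine.

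Finally I would observe that both possible values of the level set lie in $FF(L)$: the set $F$ is a fated filter by hypothesis, and $L$ is itself a fated filter, being the largest element of the complete lattice $FF(L)$ noted just before Lemma 3.1 (\ref{{L091209}}). Hence $U(\map;t)\in FF(L)\subseteq FF(L)\cup\{\emptyset\}$ for all $t\in\left(0,\1k2\right]$, and Theorem 3.12 (\ref{{T091115-66}}) immediately delivers the conclusion that $\map$ is an $(\in, \ivqk)$-\ff filter of $L$. There is no serious obstacle in this argument; the whole content is the two-line computation of the level sets, with the characterization theorem doing all the heavy lifting.
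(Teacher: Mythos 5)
Your proposal is correct and follows essentially the same route as the paper's own proof: both compute the level sets $U(\map;s)=L$ for $s\in (0,t_2]$ and $U(\map;s)=F$ for $s\in \left(t_2,\1k2\right],$ note that both are fated filters, and invoke the characterization in Theorem 3.12\ref{{T091115-66}}. Your version merely spells out the case analysis and the role of the strict inequality $t_2<\1k2$ that the paper leaves implicit.
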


\begin{proof} Note that
  \begin{equation*}
   U(\map;s)=\left\{\begin{array}{ll}
         F & \textrm{if \, $s\in (t_2, \1k2]$}, \\
         L & \textrm{if \, $s\in (0,t_2]$}
 \end{array}\right.
 \end{equation*}
which is a fated filter of $L.$ It follows from Theorem 3.12
\ref{{T091115-66}} that $\map$ is an $\left(\in, \ivqk\right)$-\ff
filter of $L.$
\end{proof}
\fudl
\begin{cor}[\cite{ID 980315}]\label{{C091118}}
 If $F$ is a fated filter of $L,$ then a fuzzy subset $\map$ of
 $L$ defined by
 \[\map:L\rightarrow [0,1], ~x\mapsto
   \left\{\begin{array}{ll}
         t_1 & \textrm{if \, $x\in F$}, \\
         t_2 & \textrm{otherwise}
 \end{array}\right.\]
where $t_1\in [0.5,1]$ and $t_2\in (0,0.5),$ is an $(\in,
\ivq)$-\ff filter of $L.$
\end{cor}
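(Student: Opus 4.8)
The plan is to obtain this corollary as nothing more than the $k=0$ specialization of Theorem~\ref{{T091118-8}}, which has already been established. First I would record the single piece of bookkeeping that makes the two statements line up: when $k=0$ we have $\1k2=\tfrac12=0.5$. Consequently the admissible ranges $t_1\in\left[\1k2,1\right]$ and $t_2\in\left(0,\1k2\right)$ appearing in Theorem~\ref{{T091118-8}} collapse to exactly $t_1\in[0.5,1]$ and $t_2\in(0,0.5)$, which are precisely the hypotheses imposed here on $t_1$ and $t_2$.

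Next I would invoke the convention stated immediately after Definition~\ref{{D091115-33}}, namely that an $(\in,\ivqk)$-\ff filter with $k=0$ is by definition an $(\in,\ivq)$-\ff filter. Thus the conclusion of Theorem~\ref{{T091118-8}} at the value $k=0$ reads verbatim as the assertion of this corollary, and no further argument is required.

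If instead one preferred a self-contained verification that does not cite Theorem~\ref{{T091118-8}}, the same strategy reproduces it directly. I would compute the level sets of $\map$ and observe that, for $s\in(0,0.5]$,
\[
U(\map;s)=\begin{cases} F & \text{if } s\in(t_2,0.5],\\ L & \text{if } s\in(0,t_2],\end{cases}
\]
using that elements of $F$ take the value $t_1\ge 0.5\ge s$ while elements outside $F$ take the value $t_2<0.5$. Both $F$ and $L$ are fated filters of $L$ ($L$ trivially, $F$ by hypothesis), so applying Corollary~\ref{{C091115-66}}, the $k=0$ level-set characterization, yields that $\map$ is an $(\in,\ivq)$-\ff filter of $L$.

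There is essentially no obstacle in this argument: the only point to watch is the threshold substitution $\1k2\mapsto 0.5$, and in the direct route the only thing to confirm is that on the relevant range $(0,0.5]$ the level sets are exhausted by $F$ and $L$. Both checks are routine, so the corollary is immediate.
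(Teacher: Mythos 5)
Your proposal is correct and takes essentially the same route as the paper: the paper obtains this corollary exactly by setting $k=0$ in Theorem~\ref{{T091118-8}} (so that $\tfrac{1-k}{2}=0.5$ and the ranges for $t_1,t_2$ match), invoking the stated convention that an $(\in,\ivqk)$-fuzzy fated filter with $k=0$ is an $(\in,\ivq)$-fuzzy fated filter. Your alternative self-contained check via the level sets $U(\map;s)$ and Corollary~\ref{{C091115-66}} is also sound, and merely reproduces at $k=0$ the level-set computation the paper uses to prove Theorem~\ref{{T091118-8}} itself.
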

For any fuzzy subset $\map$ of $L$ and any $t\in (0,1],$ we
consider two subsets:
 \begin{equation*}\begin{split}
  & Q(\map;t):=\left\{x\in L\mid x_t\, {q}\, \map\right\},
         ~~[\map]_t:=\left\{x\in L\mid x_t\ivq \, \map\right\}.
\end{split} \end{equation*}
It is clear that $[\map]_t=U(\map;t)\cup Q(\map;t)$ (see \cite{ID
980315}). We also consider the following two sets:
 \[
   Q_k(\map;t):=\left\{x\in L\mid x_t\qk \map\right\},
         ~~[\map]_t^k:=\left\{x\in L\mid x_t\ivqk \, \map\right\}.\]
Obviously, $[\map]_t^k=U(\map;t)\cup Q_k(\map;t)$ and if $k=0$
then $Q_k(\map;t)=Q(\map;t)$ and $[\map]_t^k=[\map]_t.$

\begin{thm}\label{{T091124-4}}
If $\map$ is an $\left(\in, \ivqk\right)$-\ff filter of $L,$ then
 \[\left(\forall t\in (\1k2,1]\right)~\left(Q_k(\map;t)\in FF(L)\cup \{\emptyset\}\right).\]
\end{thm}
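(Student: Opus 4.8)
The plan is to read everything off from the inequality characterization in Theorem \ref{{T091115-5}}3.8 rather than arguing directly with fuzzy points. Fix $t\in\left(\1k2,1\right]$ and suppose $Q_k(\map;t)\ne\emptyset$, the empty case being covered by the ``$\cup\,\{\emptyset\}$'' clause. Recall that $x\in Q_k(\map;t)$ means precisely $\map(x)+t+k>1$, i.e.\ $\map(x)>1-t-k$. The single fact on which the whole argument turns is the elementary equivalence
\[
 t>\1k2 \quad\Longleftrightarrow\quad 1-t-k<\1k2 ,
\]
so that on the range under consideration the auxiliary threshold $\1k2$ that appears inside the minima of Theorem \ref{{T091115-5}}3.8 always dominates the bound $1-t-k$; hence any quantity exceeding $1-t-k$ still exceeds it after one takes its minimum with $\1k2$.

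First I would verify $1\in Q_k(\map;t)$. Choosing any $a\in Q_k(\map;t)$ gives $\map(a)>1-t-k$, and Theorem \ref{{T091115-5}}3.8(1) yields $\map(1)\ge\min\left\{\map(a),\1k2\right\}$. Since both $\map(a)>1-t-k$ and $\1k2>1-t-k$, the minimum still exceeds $1-t-k$, whence $\map(1)>1-t-k$, that is $1\in Q_k(\map;t)$; this is condition (b1).

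Next I would check the defining implication (\ref{{z091220}}) of a fated filter. Let $x,y\in L$ and $a\in Q_k(\map;t)$ satisfy $a\to ((x\to y)\to x)\in Q_k(\map;t)$, so that $\map(a)>1-t-k$ and $\map\bigl(a\to ((x\to y)\to x)\bigr)>1-t-k$. Applying Theorem \ref{{T091115-5}}3.8(2) gives
\[
 \map(x)\ge\min\left\{\map\bigl(a\to ((x\to y)\to x)\bigr),\,\map(a),\,\1k2\right\},
\]
and all three entries of this minimum exceed $1-t-k$, so $\map(x)>1-t-k$, i.e.\ $x\in Q_k(\map;t)$. Together with (b1) this shows $Q_k(\map;t)$ is a fated filter, as required.

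I do not expect a genuine obstacle here: the structural content is entirely inherited from Theorem \ref{{T091115-5}}3.8, and the alternative route through the level sets $U(\map;t)$ of Theorem \ref{{T091115-66}}3.12 would be clumsier since those are controlled only for $t\le\1k2$. The one point requiring care is the bookkeeping of strict versus non-strict inequalities, because membership in $Q_k(\map;t)$ is governed by the \emph{strict} inequality $\map(\cdot)+t+k>1$; one must ensure each conclusion ($\map(1)>1-t-k$ and $\map(x)>1-t-k$) is deduced with the strict sign intact, which is exactly why the range restriction $t>\1k2$ — equivalently $1-t-k<\1k2$ — is needed rather than $t\ge\1k2$.
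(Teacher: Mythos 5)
Your proof is correct and follows essentially the same route as the paper's: both verify the fated-filter conditions for $Q_k(\map;t)$ directly from the inequality characterization in Theorem \ref{{T091115-5}}, using that $t>\1k2$ forces $1-t-k<\1k2$. The only difference is cosmetic --- where the paper splits into cases according to whether $\1k2$ attains the minimum, you observe at once that every entry of the minimum exceeds $1-t-k$, which is a tidier way of saying the same thing.
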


\begin{proof} Assume that $\map$ is an $\left(\in, \ivqk\right)$-\ff filter of $L$
and let $t\in \left(\1k2,1\right]$ be such that
  $Q_k(\map;t)\ne \emptyset.$  Then there exists $x\in Q_k(\map;t),$ and so
$\map(x)+t+k>1.$ Using Theorem 3.8\ref{{T091115-5}}(1), we have
\begin{equation*}\begin{split}
  \map(1)&\ge \min\left\{\map(x),\1k2\right\}  \\
   & =\left\{\begin{array}{ll}
         \1k2 & \textrm{if \, $\map(x)\ge \1k2$}, \\
         \map(x) & \textrm{if \, $\map(x)<\1k2$}
 \end{array}\right.\\
  &>1-t-k,
  \end{split}\end{equation*}
which implies that $1\in Q_k(\map;t).$ Assume that
  $a\to ((x\to y)\to x)\in Q_k(\map;t)$ and $a\in Q_k(\map;t)$ for all $x,a,y\in L.$
 Then $(a\to ((x\to y)\to x))_t\qk \map$ and $a_t\qk \map,$
that is,
 $\map(a\to ((x\to y)\to x))>1-t-k$ and $\map(a)>1-t-k.$
 Using Theorem \ref{{T091115-5}}3.8(2), we get
\[\map(x)\ge \min\left\{\map(a\to ((x\to y)\to x)), \map(a), \1k2\right\}.\]
Thus, if $\min\{\map(a\to ((x\to y)\to x)), \map(a)\}< \1k2,$ then
 \[\map(x)\ge \min\left\{\map(a\to ((x\to y)\to x)), \map(a)\right\}>1-t-k.\] If
$\min\{\map(a\to ((x\to y)\to x)), \map(a)\}\ge \1k2,$ then
$\map(x)\ge \1k2>1-t-k.$ It follows that $x_t\qk \map$ so that
$x\in Q_k(\map;t).$ Therefore $Q_k(\map;t)$ is a fated filter of
$L.$
\end{proof}
\fudl
\begin{cor}[\cite{ID 980315}]\label{{C091124-4}}
If $\map$ is an $(\in, \ivq)$-\ff filter of $L,$ then
 \[(\forall t\in (0.5,1])~\bigl(Q(\map;t)\in FF(L)\cup \{\emptyset\}\bigr).\]
\end{cor}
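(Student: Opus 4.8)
The plan is to show that, for each admissible $t$, the set $Q_k(\map;t)=\{x\in L\mid \map(x)+t+k>1\}$ satisfies the two defining clauses of a fated filter, namely (b1) and the implication (\ref{{z091220}}). The whole argument will run through the pointwise characterization in Theorem~\ref{{T091115-5}}, which recasts the $(\in,\ivqk)$-condition as $\map(1)\ge\min\{\map(x),\1k2\}$ together with $\map(x)\ge\min\{\map(a\to((x\to y)\to x)),\map(a),\1k2\}$. Since belonging to $Q_k(\map;t)$ is literally the strict inequality $\map(x)>1-t-k$, everything reduces to checking that these two inequalities keep the relevant values above $1-t-k$ whenever the hypotheses hold. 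I would fix $t\in(\1k2,1]$ with $Q_k(\map;t)\ne\emptyset$.

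The one quantitative fact that makes the argument work is the elementary inequality $1-t-k<\1k2$, valid precisely because $t>\1k2$ (indeed $1-\1k2-k=\1k2$). I would record this at the outset, since it guarantees that the threshold $\1k2$ appearing in Theorem~\ref{{T091115-5}} already strictly exceeds $1-t-k$; this compatibility between the cut level $\1k2$ and the range $(\1k2,1]$ of $t$ is the real crux, and once it is in hand the rest is a routine transfer of the two inequalities into strict $>1-t-k$ statements.

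For (b1), I would take any $x\in Q_k(\map;t)$, so $\map(x)>1-t-k$, and apply Theorem~\ref{{T091115-5}}(1): since $\map(1)\ge\min\{\map(x),\1k2\}$ and both $\map(x)$ and $\1k2$ exceed $1-t-k$, so does the minimum, whence $\map(1)+t+k>1$ and $1\in Q_k(\map;t)$. For the fated-filter implication, I would assume $a\to((x\to y)\to x)\in Q_k(\map;t)$ and $a\in Q_k(\map;t)$, that is $\map(a\to((x\to y)\to x))>1-t-k$ and $\map(a)>1-t-k$, and invoke Theorem~\ref{{T091115-5}}(2): as all three arguments of the minimum in $\map(x)\ge\min\{\map(a\to((x\to y)\to x)),\map(a),\1k2\}$ exceed $1-t-k$, we obtain $\map(x)>1-t-k$, i.e. $x\in Q_k(\map;t)$. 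This verifies (\ref{{z091220}}), so $Q_k(\map;t)\in FF(L)$, and together with the alternative $Q_k(\map;t)=\emptyset$ this yields the claim.
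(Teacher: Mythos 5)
Your proposal is correct and follows essentially the same route as the paper: the corollary is the $k=0$ case of Theorem~\ref{{T091124-4}}, whose proof likewise verifies (b1) and the fated-filter implication (\ref{{z091220}}) for $Q_k(\map;t)$ via the two inequalities of Theorem~\ref{{T091115-5}}. Your only deviation is cosmetic --- by isolating the observation $1-t-k<\1k2$ up front you replace the paper's case splits (on whether the relevant values are $\ge \1k2$ or $<\1k2$) with a single ``all entries of the minimum exceed $1-t-k$'' step, which is a clean simplification of the same argument.
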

\fudl
\begin{cor}\label{{C091124-4}}
If $\map$ is a strong $\left(\in, \ivqk\right)$-\ff filter of $L,$
then
 \[(\forall t\in (\1k2,1])~\left(Q_k(\map;t)\in FF(L)\cup \{\emptyset\}\right).\]
\end{cor}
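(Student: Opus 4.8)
The plan is to read this corollary as nothing more than the ``strong'' restatement of Theorem 3.20\ref{{T091124-4}}: the conclusion about $Q_k(\map;t)$ for $t\in (\1k2,1]$ is word-for-word the same, and only the hypothesis has been strengthened from an $\left(\in, \ivqk\right)$-\ff filter to a strong one. So the whole strategy is to downgrade the strong hypothesis to the ordinary one and then quote Theorem 3.20\ref{{T091124-4}} unchanged.

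First I would record the implication already noted in the text, that every strong $\left(\in, \ivqk\right)$-\ff filter of $L$ is an $\left(\in, \ivqk\right)$-\ff filter of $L.$ If I wanted to spell it out, I would observe that a strong filter satisfies (c3) and Definition \ref{{D091115-33}}3.3(2), and that (c3) already implies Definition \ref{{D091115-33}}3.3(1): from $x_t\in \map$ we get $\map(x)\ge t,$ hence $\map(1)\ge \map(x)\ge t,$ so $1_t\in \map$ and in particular $1_t\ivqk \map.$ Thus both clauses of Definition \ref{{D091115-33}}3.3 hold and $\map$ is an ordinary $\left(\in, \ivqk\right)$-\ff filter. One could equally route through Corollary 3.11\ref{{C091123-3}} and Theorem 3.8\ref{{T091115-5}}: the former says a strong $\map$ obeys the two inequalities of the latter, which characterize ordinary $\left(\in, \ivqk\right)$-\ff filters.

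Having reduced to an ordinary $\left(\in, \ivqk\right)$-\ff filter, I would then apply Theorem 3.20\ref{{T091124-4}} directly to $\map,$ concluding $Q_k(\map;t)\in FF(L)\cup \{\emptyset\}$ for every $t\in (\1k2,1].$ This is exactly the assertion to be proved, so the argument terminates there. I do not expect any genuine obstacle: all the substantive work --- verifying that the $q_k$-level sets $Q_k(\map;t)$ satisfy (b1) and the fated-filter condition --- was already carried out in the proof of Theorem 3.20\ref{{T091124-4}}. The only step that even warrants a sentence is the passage ``strong $\Rightarrow$ ordinary,'' and that rests solely on the monotonicity $\map(1)\ge \map(x).$
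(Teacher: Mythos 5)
Your proposal is correct and is exactly the paper's (implicit) argument: the paper leaves this corollary without a printed proof precisely because it remarks after Example 3.4 that every strong $(\in, \ivqk)$-fuzzy fated filter is an $(\in, \ivqk)$-fuzzy fated filter, whence Theorem \ref{{T091124-4}} applies verbatim. Your explicit verification that (c3) yields Definition \ref{{D091115-33}}(1) via $\map(1)\ge \map(x)\ge t$ correctly fills in the one step the paper calls ``obvious.''
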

\fudl
The converse of Corollary \ref{{C091124-4}} is not true as shown
by the following example.

\begin{example}\label{{E091124-4}}
 Consider the $(\in,$ $\in\! \vee  {q}_{0.2})$-\ff filter $\mbp$ of $L$
 which is given in Example 3.4\ref{{E091003-33}}.
Then
     \begin{equation*}
   Q_k(\mbp;t)=\left\{\begin{array}{ll}
         L & \textrm{if \, $t\in (0.5, 1]$}, \\
         \{c,d,1\} & \textrm{if \, $t\in (0.4,0.5]$}
 \end{array}\right.
 \end{equation*}
is a fated filter of $L.$ But $\mbp$ is not a  strong $(\in,$
$\in\! \vee  {q}_{0.2})$-\ff filter of $L.$
\end{example}

\begin{thm}\label{{T091225-5}}
 For a fuzzy subset $\map$ of $L,$ the following assertions are
 equivalent:
 \begin{enumerate}
 \item[\rm (1)] $\map$ is an $\left(\in,\ivqk\right)$-\ff filter of $L.$
 \item[\rm (2)] $(\forall t\in (0,1])~\left([\map]_t^k\in FF(L)\cup
         \{\emptyset\}\right).$
 \end{enumerate}

We call $[\map]_t^k$ an {\it $(\ivqk)$-level fated filter} of
$\map.$
\end{thm}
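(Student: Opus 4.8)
The plan is to identify the set $[\map]_t^k$ with one of the two sets whose behaviour we have already analysed---the level set $U(\map;t)$ and the quasi-coincidence set $Q_k(\map;t)$---and then to invoke Theorem \ref{{T091115-66}} and Theorem \ref{{T091124-4}}. Recall that $[\map]_t^k=U(\map;t)\cup Q_k(\map;t)$, so the whole argument hinges on understanding how these two pieces overlap as $t$ ranges over $(0,1]$.

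First I would establish the dichotomy
\[
 [\map]_t^k=\begin{cases} U(\map;t) & \text{if } t\in\left(0,\1k2\right],\\ Q_k(\map;t) & \text{if } t\in\left(\1k2,1\right].\end{cases}
\]
For the first line, suppose $t\le\1k2$ and $x\in Q_k(\map;t)$, i.e. $\map(x)+t+k>1$; then $\map(x)>1-t-k\ge 1-\1k2-k=\1k2\ge t$, so $x\in U(\map;t)$. Hence $Q_k(\map;t)\subseteq U(\map;t)$ and the union collapses to $U(\map;t)$. For the second line, suppose $t>\1k2$ and $x\in U(\map;t)$, i.e. $\map(x)\ge t$; then $\map(x)+t+k\ge 2t+k>(1-k)+k=1$, so $x\in Q_k(\map;t)$, giving $U(\map;t)\subseteq Q_k(\map;t)$ and the union collapses to $Q_k(\map;t)$. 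This arithmetic around the threshold $\1k2$ is the only genuinely substantive step; everything else is bookkeeping on top of earlier results.

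With the dichotomy in hand, the forward direction $(1)\Rightarrow(2)$ is immediate: if $\map$ is an $(\in,\ivqk)$-\ff filter, then Theorem \ref{{T091115-66}} gives $U(\map;t)\in FF(L)\cup\{\emptyset\}$ for every $t\in\left(0,\1k2\right]$, while Theorem \ref{{T091124-4}} gives $Q_k(\map;t)\in FF(L)\cup\{\emptyset\}$ for every $t\in\left(\1k2,1\right]$; combining these with the case split shows $[\map]_t^k\in FF(L)\cup\{\emptyset\}$ for all $t\in(0,1]$.

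For the converse $(2)\Rightarrow(1)$ I would simply restrict attention to the lower range. By hypothesis $[\map]_t^k\in FF(L)\cup\{\emptyset\}$ for all $t\in(0,1]$, hence in particular for $t\in\left(0,\1k2\right]$, where the dichotomy gives $[\map]_t^k=U(\map;t)$. Thus $U(\map;t)\in FF(L)\cup\{\emptyset\}$ for all $t\in\left(0,\1k2\right]$, and Theorem \ref{{T091115-66}} then yields that $\map$ is an $(\in,\ivqk)$-\ff filter of $L$, completing the equivalence. The expected obstacle is solely the threshold computation displayed above; I do not anticipate any difficulty in the two reductions, since they reuse the characterizations verbatim.
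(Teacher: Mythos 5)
Your proof is correct, but it takes a genuinely different route from the paper's. Your key observation --- that the union $[\map]_t^k=U(\map;t)\cup Q_k(\map;t)$ always collapses to one of its two constituents, namely $U(\map;t)$ for $t\in\left(0,\1k2\right]$ and $Q_k(\map;t)$ for $t\in\left(\1k2,1\right]$ --- is verified correctly by your threshold arithmetic ($\map(x)>1-t-k\ge\1k2\ge t$ in the lower range, $\map(x)+t+k\ge 2t+k>1$ in the upper range), and it reduces the theorem to two earlier results: Theorem \ref{{T091115-66}}, which is an equivalence and which you rightly invoke in both directions, and Theorem \ref{{T091124-4}}, a one-way implication used only in your forward step; there is no circularity, since both are proved before this theorem and independently of it. The paper, by contrast, works directly from the inequalities of Theorem \ref{{T091115-5}}: in the forward direction it shows $1\in[\map]_t^k$ by two subcases and verifies closure under the fated-filter condition by splitting into four cases according to whether each of the two hypothesized memberships in $[\map]_t^k$ holds via $\in$ or via $q_k$, and in the converse it derives the two inequalities of Theorem \ref{{T091115-5}} by contradiction, checking separately that the offending element lies in neither $U(\map;t)$ nor $Q_k(\map;t)$. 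Your argument is shorter and more conceptual: it makes explicit that $[\map]_t^k$ is never a proper union, which at once explains why the paper's four cases collapse pairwise on each side of the threshold, and it shows as a byproduct that condition (2) restricted to $t\in\left(0,\1k2\right]$ already implies (1). What the paper's direct computation buys is self-containedness --- it needs only Theorem \ref{{T091115-5}}, not Theorem \ref{{T091124-4}} --- so the statement about $Q_k$-levels remains a separate, logically independent result rather than an ingredient of this one.
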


 \begin{proof} Assume that $\map$ is an $\left(\in,\ivqk\right)$-\ff filter of
 $L$ and let $t\in (0,1]$ be such that $[\map]_t^k\ne \emptyset.$
Then  there exists $x\in [\map]_t^k=U(\map;t)\cup Q_k(\map;t),$
and so $x\in U(\map;t)$ or $x\in Q_k(\map;t).$
 If $x\in U(\map;t),$ then $\map(x)\ge t.$ It follows from Theorem
 \ref{{T091115-5}}3.8(1) that
\begin{equation*}\begin{split}
 \map(1)&\ge \min\left\{\map(x), \1k2\right\}\ge \min\left\{t,\1k2\right\} \\
   & =\left\{\begin{array}{ll}
         t & \textrm{if \, $t\le \1k2$}, \\
         \1k2>1-t-k & \textrm{if \, $t>\1k2$}
 \end{array}\right.\\
  \end{split}\end{equation*}
so that $1\in U(\map;t)\cup Q_k(\map;t)=[\map]_t^k.$ If $x\in
Q_k(\map;t),$ then  $\map(x)+t+k>1.$ Thus
\begin{equation*}\begin{split}
 \map(1)&\ge \min\{\map(x), \1k2\}\ge \min\{1-t-k,\1k2\} \\
   & =\left\{\begin{array}{ll}
         1-t-k & \textrm{if \, $t> \1k2$}, \\
         \1k2\ge t & \textrm{if \, $t\le \1k2$}
 \end{array}\right.\\
  \end{split}\end{equation*}
and so $1\in Q_k(\map;t)\cup U(\map;t)=[\map]_t^k.$
        Let $x,a,y\in L$ be such that $a\in [\map]_t^k$ and
$a\to ((x\to y)\to x)\in [\map]_t^k.$ Then
 \[ \map(a)\ge t ~\text{ or } ~\map(a)+t+k>1,\]
  and
 \[\map(a\to ((x\to y)\to x))\ge t ~\text{ or }
      ~\map(a\to ((x\to y)\to x))+t+k>1.\]
 We can consider four cases:
  \begin{eqnarray}
  && \text{\rm  $\map(a)\ge t$ and $\map(a\to ((x\to y)\to x))\ge t,$}  \label{{z091224-1}}\\
  && \text{\rm  $\map(a)\ge t$ and $\map(a\to ((x\to y)\to x))+t+k>1,$}  \label{{z091224-2}}\\
  && \text{\rm  $\map(a)+t+k>1$ and $\map(a\to ((x\to y)\to x))\ge t,$}  \label{{z091224-3}}\\
  && \text{\rm  $\map(a)+t+k>1$ and $\map(a\to ((x\to y)\to x))+t+k>1.$} \label{{z091224-4}}
\end{eqnarray}
For the first case, Theorem \ref{{T091115-5}}3.8(2) implies that
\begin{equation*}\begin{split}
 \map(x)&\ge \min\{\map(a\to ((x\to y)\to x)), \map(a), \1k2\}\\
  &\ge \min\{t,\1k2\}
    =\left\{\begin{array}{ll}
         \1k2 & \textrm{if \, $t> \1k2$}, \\
         t   & \textrm{if \, $t\le \1k2$}
 \end{array}\right.\\
  \end{split}\end{equation*}
so that $x\in U(\map;t)$ or $\map(x)+t+k>\1k2+\1k2+k=1,$ i.e.,
$x\in Q_k(\map;t).$ Hence $x\in [\map]_t^k.$
    Case (\ref{{z091224-2}}) implies that
\begin{equation*}\begin{split}
 \map(x)&\ge \min\{\map(a\to ((x\to y)\to x)), \map(a), \1k2\}\\
  &\ge \min\{1-t-k,t,\1k2\}
    =\left\{\begin{array}{ll}
         1-t-k & \textrm{if \, $t> \1k2$}, \\
         t   & \textrm{if \, $t\le \1k2.$}
 \end{array}\right.\\
  \end{split}\end{equation*}
Thus $x\in Q_k(\map;t)\cup U(\map;t)=[\map]_t^k.$ Similarly, $x\in
[\map]_t^k$ for the case (\ref{{z091224-3}}). The final case
implies that
\begin{equation*}\begin{split}
 \map(x)&\ge \min\{\map(a\to ((x\to y)\to x)), \map(a), \1k2\}\\
  &\ge \min\{1-t-k,\1k2\}
    =\left\{\begin{array}{ll}
         1-t-k & \textrm{if \, $t> \1k2$}, \\
         \1k2   & \textrm{if \, $t\le \1k2$}
 \end{array}\right.\\
  \end{split}\end{equation*}
so that $x\in Q_k(\map;t)\cup U(\map;t)=[\map]_t^k.$ Consequently
$[\map]_t^k$ is a \ff filter of $L.$

Conversely, let $\map$ be a fuzzy subset of $L$ such that
$[\map]_t^k$ is a fated filter of $L$ whenever it is nonempty for
all $t\in (0,1].$ If there exists $a\in L$ such that
$\map(1)<\min\{\map(a), \1k2\},$ then
 $\map(1)<t_a\le \min\{\map(a), \1k2\}$ for some $t_a\in (0,\1k2].$
 It follows that $a\in U(\map;t_a)$ but $1\notin U(\map;t_a).$
Also, $\map(1)+t_a+k<2t_a+k\le 1$ and so $1\notin Q_k(\map;t_a).$
Hence $1\notin U(\map;t_a)\cup Q_k(\map;t_a)=[\map]_{t_a}^k,$
which is a contradiction. Therefore $\map(1)\ge \min\{\map(x),
\1k2\}$ for all $x\in L.$ Suppose that
 \begin{equation}\begin{split}\label{{z091224-5}}
 \map(x)<\min\{\map(a\to ((x\to y)\to x)), \map(a), \1k2\}
\end{split}\end{equation}
for some $x,a,y\in L.$  Taking
 $t:=\min\{\map(a\to ((x\to y)\to x)), \map(a), \1k2\}$ implies
 that $t\in (0,\1k2],$ $a\in U(\map;t)\subseteq [\map]_t^k$ and
$a\to ((x\to y)\to x)\in U(\map;t)\subseteq [\map]_t^k.$ Since
$[\map]_t^k\in FF(L),$ it follows that $x\in
[\map]_t^k=U(\map;t)\cup Q_k(\map;t).$  But (\ref{{z091224-5}})
induces $x\notin U(\map;t)$ and $\map(x)+t+k<2t+k\le 1,$ i.e.,
$x\notin Q_k(\map;t).$ This is a contradiction, and thus
 $\map(x)\ge \min\{\map(a\to ((x\to y)\to x)), \map(a), \1k2\}$
for all $x,a,y\in L.$ Using Theorem 3.8\ref{{T091115-5}}, we conclude
that $\map$ is an $(\in,$ $\ivqk)$-\ff filter of $L.$
\end{proof}
\fudl
\begin{cor}[\cite{ID 980315}]\label{{C091225-5}}
 For a fuzzy subset $\map$ of $L,$ the following assertions are
 equivalent:
 \begin{enumerate}
 \item[\rm (1)] $\map$ is an $(\in,$ $\ivq)$-\ff filter of $L.$
 \item[\rm (2)] $(\forall t\in (0,1])~\left([\map]_t\in FF(L)\cup
         \{\emptyset\}\right).$
 \end{enumerate}\end{cor}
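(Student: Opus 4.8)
The plan is to deduce this equivalence as the special case $k=0$ of the preceding theorem (Theorem~\ref{{T091225-5}}), rather than to re-run the four-case argument. First I would invoke the conventions fixed earlier in this section: the paper declares that a (strong) $(\in, \ivqk)$-\ff filter with $k=0$ is, by definition, an $(\in, \ivq)$-\ff filter; and in the paragraph introducing $Q_k(\map;t)$ and $[\map]_t^k$ it is recorded that when $k=0$ one has $Q_k(\map;t)=Q(\map;t)$ and $[\map]_t^k=[\map]_t$. Finally, at $k=0$ the threshold $\1k2$ collapses to $0.5$, so every quantity occurring in Theorem~\ref{{T091225-5}} degenerates to its unsubscripted counterpart.

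With these identifications in hand, the two clauses of the corollary are literally the $k=0$ instances of the two clauses of Theorem~\ref{{T091225-5}}: clause (1), that $\map$ is an $(\in,\ivq)$-\ff filter, is clause (1) of the theorem read at $k=0$; and clause (2), that $[\map]_t\in FF(L)\cup\{\emptyset\}$ for all $t\in(0,1]$, is clause (2) of the theorem read at $k=0$, since $[\map]_t^k$ becomes $[\map]_t$ while the quantifier range $(0,1]$ is unaffected by the value of $k$. Hence the biconditional of the theorem specializes verbatim to the biconditional asserted here, and no further work is required.

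Because this is a pure specialization, I do not anticipate any genuine obstacle; the only point needing (trivial) care is bookkeeping, namely checking that each occurrence of $\1k2$, $Q_k$, and $[\map]_t^k$ really does reduce to $0.5$, $Q$, and $[\map]_t$ at $k=0$, which the excerpt has already guaranteed. Should a self-contained argument be preferred, one could instead transcribe the proof of Theorem~\ref{{T091225-5}} with $0.5$ in place of $\1k2$ and $[\map]_t$ in place of $[\map]_t^k$, thereby recovering the original result of Han et al.\ \cite{ID 980315} directly; but this is redundant given the theorem just established.
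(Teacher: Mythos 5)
Your proposal is correct and matches the paper exactly: the paper's own proof is the one-line specialization ``Taking $k=0$ in Theorem~3.24 induced the desired result,'' and your bookkeeping that $\tfrac{1-k}{2}$, $Q_k(\map;t)$, and $[\map]_t^k$ reduce to $0.5$, $Q(\map;t)$, and $[\map]_t$ at $k=0$ is precisely the (already recorded) justification underlying that step. No discrepancy to report.
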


\begin{proof} Taking $k=0$ in Theorem 3.24\ref{{T091225-5}} induced
the desired result.
 \end{proof}
\fudl
\begin{thm}\label{{T090503-11}}
Given any chain of  fated filters $F_0\subset F_1\subset \cdots
\subset F_n=L$ of $L,$ there exists an $(\in,$ $\ivqk)$-fuzzy
fated filter $\map$ of $L$ whose level fated filters are precisely
the members of the chain with $U(\map;\tfrac{1-k}{2})=F_0.$
\end{thm}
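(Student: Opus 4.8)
The plan is to reduce everything to the level-set characterization of Theorem 3.12\ref{{T091115-66}}, which says that $\map$ is an $(\in, \ivqk)$-\ff filter exactly when $U(\map;t) \in FF(L) \cup \{\emptyset\}$ for every $t \in \left(0, \1k2\right]$. So it suffices to build a fuzzy subset whose level sets, as $t$ decreases through $\left(0, \1k2\right]$, run precisely through the given chain $F_0 \subset F_1 \subset \cdots \subset F_n = L$, with $U\!\left(\map; \1k2\right) = F_0$.

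First I would fix a strictly decreasing sequence $t_0 = \1k2 > t_1 > \cdots > t_n > 0$ in $\left(0, \1k2\right]$ and define $\map : L \to [0,1]$ by $\map(x) = t_i$ whenever $x \in F_i \setminus F_{i-1}$, with the convention $F_{-1} = \emptyset$. Since $F_n = L$, each element lies in exactly one difference set, so $\map$ is well defined, and it attains its largest value $\1k2$ exactly on $F_0$.

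Then I would read off the level sets: using $F_0 \cup \cdots \cup F_i = F_i$ (the chain is increasing), one checks that $U(\map;t) = F_i$ for $t \in (t_{i+1}, t_i]$, so that $U\!\left(\map; \1k2\right) = F_0$, while $U(\map;t) = L$ for $t \in (0, t_n]$ and $U(\map;t) = \emptyset$ for $t > \1k2$ (as $\map \le \1k2$ everywhere). Hence the nonempty level sets are exactly $F_0, \ldots, F_n$, each a fated filter by hypothesis; in particular every $U(\map;t)$ with $t \in \left(0, \1k2\right]$ lies in $FF(L)$, and Theorem 3.12\ref{{T091115-66}} then yields that $\map$ is an $(\in, \ivqk)$-\ff filter whose level fated filters are precisely the members of the chain. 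There is no genuine obstacle once Theorem 3.12 is in hand; the only point requiring care is the bookkeeping of the half-open intervals $(t_{i+1}, t_i]$, which is exactly what forces the boundary value $t_0 = \1k2$ to produce $F_0$ rather than a larger set and guarantees that the level sets hit each chain member exactly once.
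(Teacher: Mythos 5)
Your proposal is correct and follows essentially the same route as the paper: both construct a step fuzzy subset constant on the successive differences $F_i\setminus F_{i-1}$ with strictly decreasing values anchored at $\tfrac{1-k}{2}$ on $F_0$, compute the level sets to be exactly $F_0,\dots,F_n$, and invoke Theorem 3.12\ref{{T091115-66}}. The only (harmless) difference is that the paper allows the values on $1$ and on $F_0\setminus\{1\}$ to exceed $\tfrac{1-k}{2}$, whereas you cap $\map$ at exactly $\tfrac{1-k}{2}$ on all of $F_0$, which if anything makes the claim that the nonempty level sets are precisely the chain members cleaner.
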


\begin{proof} Let $\{t_i\in (0,\tfrac{1-k}{2})\mid i=1,2,\cdots, n\}$
be such that $t_1>t_2>\cdots >t_n.$ Define a fuzzy subset $\map$
of $L$ by
 \[\map : L\rightarrow [0,1],
   ~~ x\mapsto
     \left\{\begin{array}{ll}
       t_0 ~(\ge \tfrac{1-k}{2}) &{\rm if}\;\, x=1,\\
       t ~(\ge  t_0)   &{\rm if}\;\, x\in F_0\setminus \{1\},\\
       t_1     &{\rm if}\;\, x\in F_1 \setminus F_0,\\
       t_2     &{\rm if}\;\, x\in F_2 \setminus F_1,\\
       \cdots \\
       t_n     &{\rm if}\;\, x\in F_n \setminus F_{n-1}.
\end{array}\right.  \]
Then
\begin{equation*}
  U(\map;s)=\left\{\begin{array}{ll}
        F_0  &{\rm if}\;\, s\in (t_1, \tfrac{1-k}{2}],\\
        F_1  &{\rm if}\;\, s\in (t_2, t_1],\\
        F_2  &{\rm if}\;\, s\in (t_3, t_2],\\
        \cdots \\
        F_n=R  &{\rm if}\;\, s\in (0, t_n].
\end{array}\right.   \end{equation*}
Using Theorem 3.12\ref{{T091115-66}}, we know that $\map$ is an
$(\in,$ $\ivqk)$-fuzzy fated filter of $L.$ It follows from the
construction of $\map$ that $U(\map;\tfrac{1-k}{2})=F_0$ and
$U(\map;t_i)=F_i$ for $i=1,2,\cdots, n.$
\end{proof}
\fudl
\begin{cor}\label{{C090503-11}}
Given any chain of  fated filters $F_0\subset F_1\subset \cdots
\subset F_n=L$ of $L,$ there exists an $(\in,$ $\ivq)$-fuzzy fated
filter $\map$ of $L$ whose level fated filters are precisely the
members of the chain with $U(\map;0.5)=F_0.$
\end{cor}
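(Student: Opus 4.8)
The plan is to construct $\map$ explicitly by a layered assignment of values along the chain, and then read off everything from the level-set characterization in Theorem 3.12\ref{{T091115-66}}. First I would fix a strictly decreasing sequence $t_1>t_2>\cdots>t_n$ lying in the open interval $\left(0,\1k2\right)$, and assign to each $x\in L$ a value determined by the unique layer $F_i\setminus F_{i-1}$ containing it: put $\map(1)=t_0$ with $t_0\ge \1k2$, give every other element of $F_0$ a value at least $t_0$, and set $\map(x)=t_i$ for each $x\in F_i\setminus F_{i-1}$ with $i\ge 1$. The design principle is that every element of $F_0$ should land at or above $\1k2$ while every element outside $F_0$ lands strictly below $\1k2$; this is exactly what forces the cut at height $\1k2$ to return $F_0$.

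Next I would compute the level sets $U(\map;s)$. For $s\in\left(t_1,\1k2\right]$ only the members of $F_0$ meet the threshold, so $U(\map;s)=F_0$; for $s\in(t_{i+1},t_i]$ the elements of $F_0\cup(F_1\setminus F_0)\cup\cdots\cup(F_i\setminus F_{i-1})=F_i$ meet it, so $U(\map;s)=F_i$; and for $s\in(0,t_n]$ every element meets it, giving $U(\map;s)=F_n=L$. In particular $U(\map;\1k2)=F_0$ and $U(\map;t_i)=F_i$ for every $i$, so the collection of level fated filters is precisely $\{F_0,F_1,\dots,F_n\}$, the members of the chain.

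To finish I would invoke Theorem 3.12\ref{{T091115-66}}, which says that $\map$ is an $(\in,\ivqk)$-\ff filter as soon as $U(\map;t)\in FF(L)\cup\{\emptyset\}$ for every $t\in\left(0,\1k2\right]$. By the computation above each such level set is one of $F_0,F_1,\dots,F_n$, and each of these is a fated filter by hypothesis, so the criterion of Theorem 3.12\ref{{T091115-66}} is met and $\map$ is the desired $(\in,\ivqk)$-\ff filter.

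There is no substantive obstacle here; the proof is a direct construction plus one application of Theorem 3.12\ref{{T091115-66}}, and the only care needed is bookkeeping. The two points to watch are, first, choosing the layer values so that $F_0$ lies at or above $\1k2$ and every outer layer lies strictly below it, which simultaneously pins $U(\map;\1k2)$ to $F_0$ and keeps each $t_i$ inside $\left(0,\1k2\right)$ so that Theorem 3.12\ref{{T091115-66}} applies across the whole range; and second, noting that allowing $\map(1)=t_0$ to be no larger than the values on other elements of $F_0$ causes no difficulty, since the only monotonicity demand is $\map(1)\ge\min\{\map(x),\1k2\}$ from Theorem 3.8\ref{{T091115-5}}(1), which holds automatically because $t_0\ge\1k2$.
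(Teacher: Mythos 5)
Your proposal is correct and is essentially the paper's own argument: the paper obtains this corollary by setting $k=0$ in Theorem 3.26\ref{{T090503-11}}, whose proof is exactly your layered construction ($\map(1)=t_0\ge \1k2,$ values $t\ge t_0$ on $F_0\setminus\{1\},$ and $t_i$ on $F_i\setminus F_{i-1}$), followed by the same computation of $U(\map;s)$ and the same appeal to Theorem 3.12\ref{{T091115-66}}. The only difference is that you re-derive the general-$k$ theorem rather than citing it, which changes nothing of substance.
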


Using a class of fated filters, we make an $(\in,$ $\ivqk)$-fuzzy
fated filter of $L.$

\begin{thm}\label{{T090503-22}}
 Let $\{F_t\mid t\in \Lambda\},$ where $\Lambda \subseteq
 \left(0,\1k2\right],$ be a collection of fated filters of $L$ such that
  \begin{enumerate}
  \item[\rm (i)] $L=\bigcup\limits_{t\in \Lambda}F_t,$
 \item[\rm (ii)] $(\forall s,t\in \Lambda)$ $(s<t \,
 \Leftrightarrow \, F_t\subset F_s).$
 \end{enumerate}
 Then a fuzzy subset $\map$ of $L$ defined by $\map(x)=\sup\{t\in
 \Lambda \mid x\in F_t\}$ for all $x\in L$ is an $(\in,$
 $\ivqk)$-fuzzy fated filter of $L.$
\end{thm}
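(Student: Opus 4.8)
The plan is to reduce everything to the level-set criterion of Theorem 3.12: it suffices to prove that for every $t\in\left(0,\1k2\right]$ the set $U(\map;t)$ is either empty or a fated filter of $L,$ for then $\map$ is an $(\in,\ivqk)$-\ff filter. Fix such a $t$ with $U(\map;t)\ne\emptyset.$ The device that makes the argument work is to record, for each $x\in L,$ the index set $S_x:=\{s\in\Lambda\mid x\in F_s\},$ so that $\map(x)=\sup S_x.$ By hypothesis (ii) the family $\{F_s\}$ is a chain whose order is reversed by $\Lambda,$ and consequently $S_x$ is an initial segment (down-set) of the totally ordered set $\Lambda$: if $x\in F_s$ and $s'\le s$ in $\Lambda,$ then $F_s\subseteq F_{s'}$ gives $x\in F_{s'}.$

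First I would check that $1\in U(\map;t).$ Since each $F_s$ is a fated filter we have $1\in F_s$ for all $s\in\Lambda,$ so $S_1=\Lambda$ and $\map(1)=\sup\Lambda.$ As $U(\map;t)$ is nonempty there is some $x_0$ with $\map(x_0)=\sup S_{x_0}\ge t,$ and $S_{x_0}\subseteq\Lambda$ forces $\map(1)=\sup\Lambda\ge t$; thus $1\in U(\map;t),$ i.e. (b1) holds. Next I would verify the defining implication (\ref{{z091220}}) directly. Suppose $a\in U(\map;t)$ and $a\to((x\to y)\to x)\in U(\map;t),$ and write $b:=a\to((x\to y)\to x).$ For every $s\in S_a\cap S_b$ both $a$ and $b$ lie in the fated filter $F_s,$ so (\ref{{z091220}}) applied inside $F_s$ yields $x\in F_s$; hence $S_a\cap S_b\subseteq S_x$ and $\map(x)=\sup S_x\ge\sup(S_a\cap S_b).$ Because $S_a$ and $S_b$ are initial segments of the totally ordered $\Lambda$ they are nested, so $S_a\cap S_b$ is the smaller of the two and $\sup(S_a\cap S_b)=\min\{\map(a),\map(b)\}\ge t.$ Therefore $\map(x)\ge t,$ that is $x\in U(\map;t).$ This shows $U(\map;t)$ is a fated filter, and Theorem 3.12 completes the argument.

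The step I expect to be the real obstacle is the one hidden in the identity $\map(x)=\sup S_x$: the supremum need not be attained, so $U(\map;t)$ is in general neither a single $F_s$ nor the naive intersection $\bigcap_{s<t}F_s$ (a two-element chain with $F_{1/2}\subsetneq F_{1/4}=L$ and $t=\tfrac12$ already shows that the latter fails, since there $U(\map;t)=F_{1/2}$ while $\bigcap_{s<t}F_s=L$). The clean way around this is precisely the observation that the index sets $S_x$ are initial segments of a totally ordered parameter set and hence pairwise nested; this makes $\sup(S_a\cap S_b)=\min\{\map(a),\map(b)\}$ hold unconditionally and lets the fated-closure computation go through without splitting into a ``$t$ is a limit of $\Lambda$ from below'' case versus a ``$t$ sits in a gap of $\Lambda$'' case. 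Finally, taking $k=0$ recovers the corresponding $(\in,\ivq)$-\ff filter statement as a corollary.
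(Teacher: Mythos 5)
Your proof is correct, and it departs from the paper's argument at exactly the point you identified. Both proofs run through the same reduction (Theorem 3.12: it suffices that each nonempty $U(\map;t)$ with $t\in \left(0,\1k2\right]$ be a fated filter), but from there the paper splits into two cases according to whether $t=\sup\{s\in \Lambda\mid s<t\}$: in the limit case it identifies $U(\map;t)=\bigcap_{s<t}F_s,$ and in the gap case it produces an $\varepsilon>0$ with $(t-\varepsilon,t)\cap \Lambda=\emptyset$ to show $U(\map;t)=\bigcup_{s\ge t}F_s,$ in each case invoking (implicitly) that intersections and unions of chains of fated filters are fated filters. You instead verify (b1) and the defining implication of a fated filter directly, and your key device is sound: by hypothesis (ii) each $S_x=\{s\in \Lambda\mid x\in F_s\}$ is a down-set of the totally ordered $\Lambda$ (and is nonempty by (i)), so any two such sets are nested, giving $\sup(S_a\cap S_b)=\min\{\map(a),\map(b)\}$ unconditionally; since $S_a\cap S_b\subseteq S_x$ by applying the fated-filter condition inside each $F_s,$ the closure computation goes through with no case analysis. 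What the paper's route buys is an explicit description of every level set as an intersection or union of members of the given chain; what yours buys is uniformity (no limit/gap dichotomy), and in fact your computation proves slightly more than is needed, namely the pointwise inequality $\map(x)\ge \min\{\map(a\to ((x\to y)\to x)), \map(a)\},$ i.e.\ condition (2) of Theorem 3.8 without the $\1k2$ truncation. One cosmetic remark: your two-element example only shows that $U(\map;t)$ can differ from $\bigcap_{s<t}F_s$ --- there it equals the single member $F_{1/2},$ which is precisely the paper's second case, so it does not witness the ``nor a single $F_s$'' half of your claim --- but this aside has no bearing on the correctness of the proof.
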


\begin{proof} According to Theorem 3.12\ref{{T091115-66}}, it is
sufficient to show that $U(\map;t)\ne \emptyset$ is a fated filter
of $L$ for all $t\in (0,\tfrac{1-k}{2}].$ We consider two cases:
 \[\text{\rm (i)\, $t=\sup\{s\in \Lambda \mid s<t\},$ \, \,
            (ii)\, $t\ne \sup\{s\in \Lambda \mid   s<t\}.$}\]
Case (i) implies that
 \[x\in U(\map;t) \, \Longleftrightarrow \, (x\in F_s, ~~\forall
 s<t) \, \Longleftrightarrow \, x\in \bigcap\limits_{s<t}F_s,\]
and so $U(\map;t)=\bigcap\limits_{s<t}F_s$ which is a fated filter
of $L.$  In the second case, we have
$U(\map;t)=\bigcup\limits_{s\ge t}F_s.$ Indeed, if $x\in
\bigcup\limits_{s\ge t}F_s,$ then $x\in F_s$ for some $s\ge t.$
Thus $\map(x)\ge s\ge t,$ i.e., $x\in U(\map;t).$ This proves
$\bigcup\limits_{s\ge t}F_s\subset U(\map;t).$ To prove the
reverse inclusion, let $x\notin \bigcup\limits_{s\ge t}F_s.$ Then
$x\notin F_s$ for all $s\ge t.$ Since $t\ne  \sup\{s\in \Lambda
\mid   s<t\},$ there exists $\varepsilon >0$ such that
$(t-\varepsilon, t)\cap \Lambda =\emptyset.$ Hence $x\notin F_s$
for all $s>t-\varepsilon,$ which means that if $x\in F_s$ then
$s\le t-\varepsilon.$ Thus $\map(x)\le t-\varepsilon<t,$ and so
$x\notin U(\map;t).$ Therefore $U(\map;t)=\bigcup\limits_{s\ge
t}F_s$ which is also a fated filter of $L.$ Consequently, $\map$
is an $(\in,$ $\ivqk)$-fuzzy fated filter of $L.$
\end{proof}

\begin{cor}\label{{C090503-2}}
 Let $\{F_t\mid t\in \Lambda\},$ where $\Lambda \subseteq
 (0,0.5],$ be a collection of fated filters of $L$ such that
  \begin{enumerate}
  \item[\rm (i)] $L=\bigcup\limits_{t\in \Lambda}F_t,$
 \item[\rm (ii)] $(\forall s,t\in \Lambda)$ $(s<t \,
 \Leftrightarrow \, F_t\subset F_s).$
 \end{enumerate}
 Then a fuzzy subset $\map$ of $L$ defined by $\map(x)=\sup\{t\in
 \Lambda \mid x\in F_t\}$ for all $x\in L$ is an $(\in,$
 $\ivq)$-fuzzy fated filter of $L.$
\end{cor}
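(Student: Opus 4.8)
The plan is to reduce everything to the level-set characterization of Theorem 3.12\ref{{T091115-66}}: since $\Lambda\subseteq\left(0,\1k2\right]$, it suffices to show that for every $t\in\left(0,\1k2\right]$ the level set $U(\map;t)$ lies in $FF(L)\cup\{\emptyset\}$. Note first that $\map$ is well defined by (i), and that every fated filter contains $1$, so $1\in F_s$ for all $s\in\Lambda$ and $\map(1)=\sup\Lambda$. The heart of the argument is to compute $U(\map;t)$ explicitly, and for this I would split on whether $t$ is a left-limit of $\Lambda$, i.e. on whether $t=\sup\{s\in\Lambda\mid s<t\}$ (Case (i)) or $t\neq\sup\{s\in\Lambda\mid s<t\}$ (Case (ii)).

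In Case (i) I claim $U(\map;t)=\bigcap_{s<t}F_s$, the intersection taken over $s\in\Lambda$. The inclusion $\supseteq$ is immediate, since $x\in F_s$ for all $s<t$ forces $\map(x)\ge\sup\{s\in\Lambda\mid s<t\}=t$. For $\subseteq$, suppose $\map(x)\ge t$ and fix $s_0\in\Lambda$ with $s_0<t$; because $\map(x)=\sup\{r\in\Lambda\mid x\in F_r\}\ge t>s_0$, there is some $r\in\Lambda$ with $x\in F_r$ and $r>s_0$, whence condition (ii) gives $F_r\subset F_{s_0}$ and so $x\in F_{s_0}$. As $t=\sup\{s\in\Lambda\mid s<t\}$ the index set is nonempty, so $U(\map;t)$ is a (nonempty) intersection of fated filters, hence itself a fated filter, being the meet in the complete lattice $FF(L)$ (equivalently, one checks directly via Lemma 3.2\ref{{L091209-1}} that an intersection of fated filters is fated).

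In Case (ii) I claim $U(\map;t)=\bigcup_{s\ge t}F_s$, the union over $s\in\Lambda$. The inclusion $\supseteq$ is clear: $x\in F_s$ for some $s\ge t$ yields $\map(x)\ge s\ge t$. For $\subseteq$ I would use that $t\neq\sup\{s\in\Lambda\mid s<t\}$ produces an $\varepsilon>0$ with $(t-\varepsilon,t)\cap\Lambda=\emptyset$; if $x\notin\bigcup_{s\ge t}F_s$ then $x$ lies in no $F_s$ with $s\ge t$ and (vacuously) in no $F_s$ with $s\in(t-\varepsilon,t)$, so any $s$ with $x\in F_s$ satisfies $s\le t-\varepsilon$, forcing $\map(x)\le t-\varepsilon<t$ and hence $x\notin U(\map;t)$. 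Condition (ii) makes $\{F_s\mid s\ge t\}$ a chain under inclusion, and the union of a chain of fated filters is again a fated filter: it contains $1$, it is a filter, and if $(x\to y)\to x$ lies in the union it lies in some $F_s$, which is fated, so $x\in F_s$ by Lemma 3.2\ref{{L091209-1}}. (When no $s\ge t$ occurs in $\Lambda$ the union is empty, which is permitted.) With both cases settled, Theorem 3.12\ref{{T091115-66}} gives that $\map$ is an $(\in,\ivqk)$-fuzzy fated filter.

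The step I expect to be the genuine obstacle is the exact identification of $U(\map;t)$ in the two regimes, since this is where the supremum defining $\map$ must be reconciled with axioms (i)–(ii); in particular the reverse inclusion in Case (ii) requires extracting a uniform gap $\varepsilon$ below $t$ from the failure of $t$ to be a left-limit of $\Lambda$, and the reverse inclusion in Case (i) requires converting $\map(x)\ge t$ into membership in every $F_{s_0}$ with $s_0<t$. By contrast, that intersections and chain-unions of fated filters remain fated is routine (the former from the complete-lattice structure of $FF(L)$, the latter from Lemma 3.2\ref{{L091209-1}}), so the casework on the threshold $t$ carries the real content.
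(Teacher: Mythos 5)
Your proposal is correct and follows essentially the same route as the paper: the corollary is just the $k=0$ instance of Theorem 3.28\ref{{T090503-22}}, whose proof is exactly your reduction to the level-set criterion of Theorem 3.12\ref{{T091115-66}} with the same two-case split on whether $t=\sup\{s\in \Lambda \mid s<t\},$ yielding $U(\map;t)=\bigcap_{s<t}F_s$ or $U(\map;t)=\bigcup_{s\ge t}F_s$ via the same $\varepsilon$-gap argument. The only difference is that you spell out details the paper leaves implicit, namely the forward inclusion in Case (i) and the routine checks that intersections and chain-unions of fated filters are fated.
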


A fuzzy subset $\map$ of  $L$ is said to be {\it proper} if ${\rm
Im}(\map)$ has at least two elements. Two fuzzy subsets are said
to be {\it equivalent} if they have same family of level subsets.
Otherwise, they are said to be {\it non-equivalent}.

\begin{thm}\label{{T080727-7}}
Let $\map$ be an $(\in,\ivqk)$-\ff filter  of  $L$ such that
$\#\{\map(x)\mid \map(x)<\tfrac{1-k}{2}\}\ge 2.$ Then there exist
two proper non-equivalent
 $(\in,\ivqk)$-\ff filters of $L$ such that $\map$ can be expressed as the union of them.
\end{thm}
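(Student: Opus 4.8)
The plan is to translate the statement entirely into the language of level subsets supplied by Theorem 3.12, which tells us that a fuzzy subset of $L$ is an $(\in,\ivqk)$-\ff filter exactly when each nonempty level subset $U(\map;t)$ with $t\in(0,\tfrac{1-k}{2}]$ is a fated filter of $L$. So it is enough to manufacture two fuzzy subsets $\map_1,\map_2$ of $L$, both lying pointwise below $\map$, whose level subsets at levels at most $\tfrac{1-k}{2}$ again reproduce members of the chain of level fated filters of $\map$, whose pointwise maximum is $\map$ (so that $\map=\map_1\cup\map_2$), and whose families of level subsets differ (so that they are non-equivalent).

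First I would use the hypothesis $\#\{\map(x)\mid \map(x)<\tfrac{1-k}{2}\}\ge 2$ to fix two consecutive values $t_1>t_2$ of $\map$ lying below $\tfrac{1-k}{2}$ (no value of $\map$ strictly between them); these yield two genuinely distinct fated filters $U(\map;t_1)\subset U(\map;t_2)$ in the chain. I would then define $\map_1$ to be $\map$ with its $t_1$-layer collapsed into the $t_2$-layer (replace every value equal to $t_1$ by $t_2$, leave all other values untouched), and $\map_2$ to be $\map$ with its $t_2$-layer repainted by a fresh value $t_3$ sitting in the gap immediately below $t_2$ (replace every value equal to $t_2$ by $t_3$, leave all other values untouched). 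Since both operations only decrease $\map$, we get $\map_1,\map_2\le\map$; and a short case split according to whether $\map(x)$ equals $t_1$, equals $t_2$, or neither shows $\max\{\map_1(x),\map_2(x)\}=\map(x)$ for all $x$, so $\map=\map_1\cup\map_2$.

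That each $\map_i$ is itself an $(\in,\ivqk)$-\ff filter I would read off from Theorem 3.12 by computing level subsets: with $t_3$ chosen just below $t_2$, every nonempty $U(\map_i;t)$ with $t\le\tfrac{1-k}{2}$ equals some $U(\map;s)$ with $s\le\tfrac{1-k}{2}$, and the latter is a fated filter because $\map$ is an $(\in,\ivqk)$-\ff filter. Properness of $\map_2$ is clear, as it merely substitutes one value of $\map$ by another and hence has as many image values as $\map$; properness of $\map_1$ follows as soon as $\map$ carries a value beyond the $t_1$- and $t_2$-layers (taking $t_1,t_2$ to be the two largest values below $\tfrac{1-k}{2}$, the values above the $t_1$-layer survive the collapse).

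The step I expect to be the crux is non-equivalence. The idea is that collapsing the $t_1$-layer erases the fated filter $U(\map;t_1)$ from the level-subset family of $\map_1$: after the collapse, the smallest level at which one descends all the way to $U(\map;t_2)$ is already $t_2$, so $U(\map;t_1)$ never appears as a level subset of $\map_1$. By contrast $\map_2$ still realises $U(\map;t_1)$, precisely as $U(\map_2;t_2)$. Since $U(\map;t_1)$ is a proper member of the chain, distinct from $U(\map;t_2)$ and from the level subset lying just above it, its presence for $\map_2$ and absence for $\map_1$ forces the two families to differ, so $\map_1$ and $\map_2$ are non-equivalent. Making this rigorous amounts to listing exactly which subsets occur as level subsets of each $\map_i$ and confirming that the omitted link is genuinely missing for $\map_1$ while genuinely present for $\map_2$; this bookkeeping is the only delicate part of the argument.
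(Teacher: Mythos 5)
Your construction is sound and leads to a valid proof, but the decomposition is genuinely different from the paper's. The paper lists \emph{all} values $t_1>t_2>\cdots>t_r$ of $\map$ below $\tfrac{1-k}{2}$ and relabels globally along the chain of $(\ivqk)$-level fated filters: its first piece $\mbp$ flattens the entire top region $[\map]_{\tfrac{1-k}{2}}^k$ down to the value $t_1$ (so its chain of level fated filters loses the top link $[\map]_{\tfrac{1-k}{2}}^k$), while its second piece $\mcp$ keeps $\map$ intact on the top region but merges the $t_1$- and $t_2$-layers at a fresh value in $(t_3,t_2)$ (so its chain loses $[\map]_{t_1}^k$); non-equivalence is then read off by comparing the two chains. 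You instead leave the top region untouched in \emph{both} pieces and operate only on two adjacent layers: collapsing the $t_1$-layer into $t_2$ deletes $U(\map;t_1)$ from the level family of your $\map_1$, while repainting the $t_2$-layer with a value $t_3$ in the gap just below leaves the level family of your $\map_2$ identical to that of $\map$, so $U(\map;t_1)$ survives there (indeed $U(\map_2;t)=U(\map;t_1)$ for $t\in(t_3,t_2]$, as you say). Your route buys locality --- only two layers are touched, the verification through Theorem \ref{{T091115-66}} is short, and the non-equivalence argument works directly with the paper's stated definition of equivalence (same family of level subsets $U$), whereas the paper compares chains of $[\,\cdot\,]_t^k$-levels, one step removed from that definition; what the paper's global relabeling buys is an explicit description of the complete chains of level fated filters of both pieces. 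Two caveats, both of which you share with the paper rather than fall behind it: picking ``two consecutive values'' and a ``gap immediately below $t_2$'' presupposes the tacit finiteness of $\{\map(x)\mid \map(x)<\tfrac{1-k}{2}\}$ that the paper also assumes by writing this set as $\{t_1,\dots,t_r\}$; and in the degenerate case ${\rm Im}(\map)=\{t_1,t_2\}$ with no value $\ge\tfrac{1-k}{2}$, your $\map_1$ becomes constant and hence not proper --- the properness condition you correctly flag is not guaranteed by the hypothesis --- but in exactly this case the paper's $\mcp$ is the constant $k$ and its proof breaks identically, so this is a defect of the theorem's published proof, not a gap peculiar to your argument.
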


\begin{proof}  Let $\{\map(x)\mid \map(x)<\tfrac{1-k}{2}\}=\{t_1, t_2, \cdots,
t_r\},$ where $t_1>t_2>\cdots >t_r$ and $r\ge 2.$ Then the chain
of $(\ivqk)$-level fated filters of $\map$ is
 \[[\map]_{\tfrac{1-k}{2}}^k\subseteq [\map]_{t_1}^k\subseteq [\map]_{t_2}^k\subseteq \cdots
 \subseteq [\map]_{t_r}^k=L.\]
Let $\mbp$ and $\mcp$ be fuzzy subsets of $L$ defined by
  \begin{equation}
 \mbp(x)=\left\{\begin{array}{ll}
     t_1  &{\rm if}\;\, x\in [\map]_{t_1}^k,\\
     t_2  &{\rm if}\;\, x\in [\map]_{t_2}^k\setminus [\map]_{t_1}^k,\\
     \cdots  &{\rm }\;\, \\
     t_r  &{\rm if}\;\, x\in [\map]_{t_r}^k\setminus [\map]_{t_{r-1}}^k,\\
\end{array}\right. \nonumber
\end{equation}
and
  \begin{equation}
 \mcp(x)=\left\{\begin{array}{ll}
     \map(x) &{\rm if}\;\, x\in [\map]_{\1k2}^k,\\
     k   &{\rm if}\;\, x\in [\map]_{t_2}^k\setminus [\map]_{\1k2}^k,\\
     t_3  &{\rm if}\;\, x\in [\map]_{t_3}^k\setminus [\map]_{t_2}^k,\\
     \cdots  &{\rm }\;\, \\
     t_r  &{\rm if}\;\, x\in [\map]_{t_r}^k\setminus [\map]_{t_{r-1}}^k,\\
\end{array}\right. \nonumber
\end{equation}
respectively, where $t_3<k<t_2.$ Then $\mbp$ and $\mcp$ are $(\in,
\ivqk)$-fuzzy fated filters of $L,$ and $\mbp, \mcp\le \map.$ The
chains of $(\ivqk)$-level fated filters of $\mbp$ and $\mcp$ are,
respectively, given by
\[ [\map]_{t_1}^k\subseteq [\map]_{t_2}^k\subseteq \cdots
 \subseteq [\map]_{t_r}^k\]
and
 \[[\map]_{\1k2}^k\subseteq [\map]_{t_2}^k\subseteq \cdots
 \subseteq [\map]_{t_r}^k.\]
Therefore $\mbp$ and $\mcp$ are non-equivalent and clearly
$\map=\mbp\cup \mcp.$ This completes the proof.
\end{proof}

\zhangjie{Conclusion}
\no
In this paper, using the "belongs to" relation ($\in$) and quasicoincidence with relation ($q$) between the fuzzy point and fuzzy sets, we introduce the notions of $(\in, \in \vee \,{q_k})$-fuzzy fated filter in an $R_0$-algebras and investigate some related properties.  We have dealt with characterizations of an  $(\in, \in \vee \,{q_k})$-fuzzy fated filter in $R_0$-algebras and have obtained an $(\in, \in \vee \,{q_k})$-fuzzy fated filter is that the notion of an $(\in, \in \vee \,{q})$-fuzzy fated filter is a special case of an $(\in, \in \vee \,{q_k})$-fuzzy fated filter. Based on these results, we shall focus on other types and their relationships among them, and also consider these generalized rough fuzzy filters of $R_0$-algebras.
\fudl


\begin{thebibliography}{99}
{\smaller

\bibitem{FSS51-235} S. K. Bhakat,  P. Das,
    On the definition of a fuzzy subgroup,
     {\smallerit Fuzzy Sets and Systems} {\smallerbf 51} (1992) 235--241.
\bibitem{FSS80-359} S. K. Bhakat,  P. Das,
      $(\in, \in\! \vee \, {q})$-fuzzy subgroup,
     {\smallerit Fuzzy Sets and Systems} {\smallerbf 80} (1996) 359--368.
\bibitem{ID 980315} J. S. Han, Y. B. Jun and H. S. Kim,
    Fuzzy fated filters of $R_0$-algebras,
    {\smallerit Discrete Dyn. Nat. Soc.} {\smallerbf 2011} (2011) Article ID 980315, 19 pages.
\bibitem{ID93249} Y. B. Jun and L. Liu,
     Filters of $R_0$-algebras,
    {\smallerit Int. J. Math. Math. Sci.} {\smallerbf 2006} (2006)  Article ID 93249, 9 pages.
\bibitem{INS171-61} L. Liu and K. Li,
    Fuzzy implicative and Boolean filters of $R_0$-algebras,
    {\smallerit Inform. Sci.} {\smallerbf 171} (2005)  61--71.
\bibitem{MLQ55-493} X. Ma, J. Zhan and  Y. B. Jun,
    On $(\in, \in \vee \,{q})$-fuzzy filters of $R_0$-algebras,
    {\smallerit Math. Log. Quart.} {\smallerbf 55} (2009) 493--508.
\bibitem{INS158-277} V. Murali,
     Fuzzy points of equivalent fuzzy subsets,
     {\smallerit Inform. Sci.} {\smallerbf 158} (2004) 277--288.
\bibitem{SCE32-56} D. W. Pei and G. J. Wang,
    The completeness and application of formal systems ${\mathfrak L}$,
    {\smallerit Sci. China (Ser. E)} {\smallerbf 32} (1) (2002) 56--64.
\bibitem{PEI01} D. W. Pei, Fuzzy logic algebras on residuated lattices, {\smallerit Southeast Asian Bulletin of Mathematics} {\smallerbf 28} (2004) 519--531.
\bibitem{JMAA76-571} P. M. Pu,  Y. M. Liu,
    Fuzzy topology I, Neighborhood structure of a fuzzy point
            and Moore-Smith convergence,
     {\smallerit J. Math. Anal. Appl.} {\smallerbf 76} (1980) 571--599.
\bibitem{Wang00} G. J. Wang,
    {\smallerit Non-Classical Mathematical Logic and Approximate Reasoning},
    Science Press, Beijing, 2000.
\bibitem{INS117-47} G. J. Wang,
    On the logic foundation of fuzzy reasoning,
    {\smallerit Inform. Sci.} {\smallerbf 117} (1999)  47--88.
\bibitem{IC8-338} L. A. Zadeh,
    Fuzzy sets,
     {\smallerit Inform. Control} {\smallerbf 8} (1965) 338--353.
\bibitem{INS172-1} L. A. Zadeh,
    Toward a generalized theory of uncertainty (GTU)-an outline,
    {\smallerit Inform. Sci.} {\smallerbf 172}, (2005)  1--40.





\label{pgCAbdio}
}
\end{thebibliography}
\end{document}